\newtheorem{theorem}{Theorem}[section]
\newtheorem*{theorem*}{Theorem B}
\newtheorem{lemma}[theorem]{Lemma}
\newtheorem{proposition}[theorem]{Proposition}
\newtheorem*{definition*}{Definition}
\newtheorem*{remark*}{Remark}
\newtheorem*{observation*}{Observation}
\newtheorem*{assumption*}{Assumption}
\newtheorem*{question*}{Question}
\newtheorem*{problem*}{Problem}
\newcommand{\R}{\mathbb{R}}
\newcommand{\E}{\mathbb{E}}
\newcommand{\PP}{\mathbb{P}}
\newcommand{\TT}{\mathcal{T}}
\newcommand{\CC}{\mathcal{C}}
\newcommand{\LL}{\mathcal{L}}
\newcommand{\FF}{\mathcal{F}}
\newcommand{\Var}{\mathrm{Var}}
\newcommand{\an}{\text{\, and \,}}
\begin{document}

\title{Moments of Mandelbrot cascades at critical exponents}

\author
{Yong Han}
\address
{Yong HAN: School of Mathematical Sciences, Shenzhen University, Shenzhen 518060, Guangdong, China}
\email{hanyong@szu.edu.cn}

\author
{Yanqi Qiu}
\address
{Yanqi Qiu: School of Fundamental Physics and Mathematical Sciences, HIAS, University of Chinese Academy of Sciences, Hangzhou 310024, China}
\email{yanqi.qiu@hotmail.com, yanqiqiu@ucas.ac.cn}

\author{Zipeng Wang}
\address{Zipeng WANG: College of Mathematics and Statistics, Chongqing University, Chongqing
401331, China}
\email{zipengwang2012@gmail.com}

\begin{abstract}
We obtain the asymptotic growth rate of the moments of the Mandelbrot random  cascades at critical exponents.  The key ingredient  is a $q$ to $q/2$ reduction method for the moment-estimation, which is obtained by combining  the martingale inequalities due to Burkholder  and   Burkholder-Rosenthal.   
\end{abstract}

\subjclass[2020]{Primary 60F15, 60G45; Secondary 60G42, 60G57}
\keywords{Mandelbrot cascades; critical exponent;  moment growth; Burkholder-Rosenthal inequalities; Burkholder inequalities}

\maketitle

\setcounter{equation}{0}

\section{Introduction}
Let us recall some basic results on the Mandelbrot random cascades  introduced in \cite{M74}.    For the rich litterature on  this topic, the reader is referred to \cite{Liu-spa, Fan-JMPA, Barral} and references therein.  

   We follow  the notation in Kahane-Peyri\`ere \cite{Kahane-Peyriere-advance} (see also its English translation \cite[pp. 372--388]{KP-en}).  Throughout the whole paper, we fix  an integer $b \ge 2$.  The $b$-adic intervals for $n =1, 2, \dots$ and $j_k = 0, \dots, b-1$ are defined by 
\[
I(j_1, j_2, \dots, j_n) = \Big[\sum_1^n j_k b^{-k},  \sum_1^n j_k b^{-k} + b^{-n} \Big[. 
\]
Consider a non-negative random variable $W$  with $\E[W]=1$ and let $W(j_1, j_2, \dots, j_n)$ denote the  independent copies of $W$. Let $\mu_n$ be the random measure defined on $[0,1]$ whose density  on the interval $I(j_1, j_2, \dots, j_n)$ is given by $W(j_1) W(j_1, j_2) \dots W(j_1, j_2, \dots, j_n)$.  

The main topic in Mandelbrot random cascades  is the study of limit behavior of  the random measures $\mu_n$ as $n$ goes to infinity. It turns out that  the limit behavior of $\mu_n$ is determined by  the non-negative martingale  
\begin{align}\label{def-Yn}
Y_n := \mu_n ([0,1])= b^{-n} \sum_{j_1, j_2 \dots, j_n} W(j_1) W(j_1,j_2)\dots W(j_1, j_2, \dots, j_n). 
\end{align}
Kahane-Peyri\`ere \cite{Kahane-Peyriere-advance} obtained the following fundamental results: 
\begin{itemize}
\item The random measures $\mu_n$ converge weakly to  a non-trivial random measure if and only if the martingale $(Y_n)_{n\ge 1}$ is uniformly integrable, and this is  in turn equivalent to  the condition $\E[W\log W] <\log b$.  
\item  Given  any  $q >1$.   The martingale $(Y_n)_{n\ge 1}$ is uniformly $L^q$-bounded if and only if $\E[W^q]<b^{q-1}$.  
\end{itemize}
In \cite{Kahane-Peyriere-advance}, the authors  associated with $W$ the convex function  
\[
\varphi_W(q)= \frac{\log \E[W^q]}{\log b} - (q-1) \in \R\cup\{+\infty\}, 
\]
which is always finite for $0 < q \le 1$ (since $W$ is integrable), and can be finite for  $q > 1$ if $\E[W^q]<\infty$. 
The function $\varphi_W$ is zero at the point $1$, and at most at one other point, $q_{crit}$, except for the following single case where $\varphi_W$ is identically zero (this case seems to be  ignored in \cite{Kahane-Peyriere-advance} since the main focus there is  on the subcritical exponents): 
\begin{align}\label{ex-W}
W_{TC} = \left\{ \begin{array}{cl} 
0 &  \text{with probability $1- b^{-1}$}
\\
b & \text{with probability $b^{-1}$}
\end{array}. 
\right. 
\end{align}
See Lemma \ref{lem-TC} for the proof of the following fact: 
\[
 \varphi_W\equiv 0   \Longleftrightarrow  \text{$\varphi_W$ vanishes at two points in $(1,\infty)$}  \Longleftrightarrow W \stackrel{d}{=} W_{TC}. 
 \]
For convenience, we say that  $W_{TC}$ is the totally critical random variable.

   The main contribution of this paper is the following refinement of the second part of   Kahane-Peyri\`ere's  result. We obtain the asymptotic  growth rate of $\E[Y_n^q]$ at its critical exponent $q= q_{crit}$. The situations  are dramastically different according to whether $W \stackrel{d}{=} W_{TC}$ (every exponent $q>1$ is critical for $W_{TC}$) or not.   

\begin{theorem}\label{thm-TC}
Let $W \stackrel{d}{=} W_{TC}$.  Then for any $q>1$, we have 
\[
\lim_{n\to \infty} \frac{\log \E[Y_n^q]}{\log n} = q-1.
\]
\end{theorem}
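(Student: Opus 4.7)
The plan is to reduce the theorem to a classical fact about critical Galton-Watson processes. Since $W_{TC}\in\{0,b\}$ almost surely, every product $W(j_1)\cdots W(j_1,\ldots,j_n)$ takes only the values $0$ and $b^n$, so \eqref{def-Yn} gives $Y_n=N_n$, where $N_n$ counts the level-$n$ $b$-adic intervals along whose ancestral chain every weight equals $b$. Declaring each of the $b$ children of a node independently ``alive'' with probability $1/b$, the sequence $(N_n)_{n\ge 0}$ is precisely a Galton-Watson process starting from one ancestor with offspring law $\mathrm{Bin}(b,1/b)$: it is critical (mean $1$), has variance $\sigma^2=1-1/b\in(0,1)$, and is bounded, hence admits moments of all orders.

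Once this identification is made, the theorem follows from the classical asymptotic $\E[N_n^q]\asymp n^{q-1}$ for critical Galton-Watson processes with finite variance. I would derive it from Kolmogorov's survival estimate $\PP(N_n>0)\sim 2/(\sigma^2 n)$ combined with Yaglom's theorem, which asserts that, conditionally on $\{N_n>0\}$, $N_n/n$ converges in distribution to an exponential law of mean $\sigma^2/2$. Writing
\[
\E[Y_n^q]=\E[N_n^q]=\PP(N_n>0)\cdot n^q\cdot\E\bigl[(N_n/n)^q\,\big|\,N_n>0\bigr]
\]
and verifying uniform integrability of the conditional variables $(N_n/n)^q$ (straightforward given bounded offspring), one concludes $\E[N_n^q]\sim c_q\,n^{q-1}$, yielding in particular the claimed logarithmic rate.

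A more self-contained route, closer in spirit to the paper, exploits the branching recursion $N_{n+1}=\sum_{i=1}^{Z}N_n^{(i)}$ with $Z\sim\mathrm{Bin}(b,1/b)$ independent of iid copies of $N_n$. The lower bound $\E[N_n^q]\gtrsim n^{q-1}$ follows at once from Jensen's inequality in the conditional law on $\{N_n>0\}$, using $\E[N_n\mid N_n>0]=\PP(N_n>0)^{-1}\sim(\sigma^2/2)n$. For the upper bound one inducts on $n$: centering by $\E[N_n]=1$ and applying the Burkholder-Rosenthal inequality bounds the $q$-th moment of $N_{n+1}-Z$ by a $q/2$-th square-function moment plus a $q$-th maximal moment of the $(N_n^{(i)}-1)$'s. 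The main obstacle is the upper bound for non-integer $q>1$: the $q$-th maximal term forces a reduction to the half-exponent $q/2$, and the induction must be arranged so that the $n^{q/2-1}$ estimate at the lower exponent combines with the branching geometry to yield $n^{q-1+o(1)}$ rather than a polylogarithmic overshoot. This $q\mapsto q/2$ reduction is exactly the mechanism announced in the abstract.
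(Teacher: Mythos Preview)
Your identification $Y_n=N_n$ with a critical Galton--Watson process with $\mathrm{Bin}(b,1/b)$ offspring is correct and clean, and the Kolmogorov--Yaglom route does yield $\E[N_n^q]\sim c_q\,n^{q-1}$, which is strictly more than the theorem asks. One point to make explicit: your ``straightforward'' uniform integrability of $(N_n/n)^q$ given survival is not a consequence of boundedness of the offspring \emph{per se}; what you actually need is the integer-moment upper bound $\E[N_n^k]\le C_k n^{k-1}$ (classical, by induction on $k$ from the branching recursion), together with the lower bound $\PP(N_n>0)\gtrsim n^{-1}$. Once those are in hand, $\sup_n\E[(N_n/n)^k\mid N_n>0]<\infty$ for every integer $k$ and UI follows. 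In fact, with the integer moment bounds available, the log-rate statement already follows by interpolation/extrapolation between integers and $L^1$, without invoking Yaglom at all.

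The paper takes a genuinely different path. It never identifies $Y_n$ as a Galton--Watson population and never uses Kolmogorov or Yaglom. Instead it applies its general $q\to q/2$ reduction (Lemma~\ref{lem-red}) together with the algebraic identity $W_{TC}^2/b\stackrel{d}{=}W_{TC}$, which makes the square-function weighted sum collapse back to a weighted sum of the \emph{same} type. Iterating $\ell$ times reduces $\E[Y_n^{2^\ell}]$ to an $L^1$-norm and gives $\E[Y_n^{2^\ell}]\asymp_{\ell,b} n^{2^\ell-1}$; general $q>1$ is then handled by the same interpolation--extrapolation trick you would use between integer moments. Your approach is shorter and more conceptual for this particular $W$, and even delivers the sharp constant; the paper's approach is self-contained (no external branching-process input) and, more importantly, serves as the warm-up for Theorem~\ref{thm-main}, where $W\not\stackrel{d}{=}W_{TC}$, no Galton--Watson shortcut exists, and the $q\to q/2$ machinery is the only tool on offer. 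Your second sketch correctly anticipates this mechanism, and you are right that closing the induction on $n$ directly is the obstacle; the paper sidesteps it by inducting on the exponent (halving $q$) rather than on the generation $n$.
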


\begin{theorem}\label{thm-main}
Assume that $W \stackrel{d}{\ne} W_{TC}$.   Let $q\ge 2$. If $\E[W^q]= b^{q-1}$, then 
\begin{align*}
  \lim_{n\to \infty} \frac{\log \E [Y_n^q] }{\log n}=1. 
 \end{align*}
\end{theorem}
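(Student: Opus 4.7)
My plan is to bracket $\E[Y_n^q]$ between $c_1 n$ and $c_2 n(\log n)^{O(1)}$, so that $\log\E[Y_n^q]/\log n\to 1$. The upper bound is the harder half, driven by the paper's key ``$q\to q/2$'' reduction, in which Burkholder's square-function inequality and the Burkholder-Rosenthal inequality are used in tandem to express the $q$-th moment of $Y_n$ through the $(q/2)$-th moment of an auxiliary Mandelbrot cascade built from the renormalized weight $\widetilde W:=W^2/\E[W^2]$. Since $W\not\stackrel{d}{=}W_{TC}$, the convex function $\varphi_W$ is strictly negative on $(1,q)$, and so by Kahane-Peyri\`ere every $\E[Y_n^p]$ with $1<p<q$ is uniformly bounded; this is the ultimate reservoir of uniform control to which the iteration must descend.

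For the upper bound I first record the martingale decomposition $Y_n=1+\sum_{k=1}^n d_k$ with $d_k=Y_k-Y_{k-1}$. Using the $b$-adic tree structure one has
\[
d_k \;=\; b^{-1}\sum_{|\mathbf{j}'|=k-1}\overline{W}(\mathbf{j}')\,V(\mathbf{j}'),\qquad \overline{W}(\mathbf{j}'):=b^{-(k-1)}\prod_{i=1}^{k-1}W(\mathbf{j}'|_i),
\]
with $V(\mathbf{j}'):=\sum_{j=0}^{b-1}W(\mathbf{j}',j)-b$ independent mean-zero summands, independent of $\FF_{k-1}$. Burkholder's inequality gives $\E[(Y_n-1)^q]\le C_q\,\E\bigl[[Y]_n^{q/2}\bigr]$ with $[Y]_n=\sum_k d_k^2$; applying Burkholder-Rosenthal conditionally on $\FF_{k-1}$ to the weighted iid sum in $d_k$ then yields
\[
\E\bigl[|d_k|^q\mid\FF_{k-1}\bigr]\;\le\; C_q\, b^{-q}\Bigl[\E[V^2]^{q/2}\Bigl(\sum_{\mathbf{j}'}\overline{W}(\mathbf{j}')^2\Bigr)^{q/2}+\E|V|^q\sum_{\mathbf{j}'}\overline{W}(\mathbf{j}')^q\Bigr].
\]
Two algebraic miracles at the critical exponent enter now: the identity $\E[W^q]=b^{q-1}$ gives $\E\bigl[\sum_{\mathbf{j}'}\overline{W}(\mathbf{j}')^q\bigr]=1$, and $\sum_{\mathbf{j}'}\overline{W}(\mathbf{j}')^2=(\E[W^2]/b)^{k-1}\widetilde Y_{k-1}$, where $\widetilde Y_k$ is the Mandelbrot cascade built from $\widetilde W=W^2/\E[W^2]$. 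Hence the $q$-th moment of each $d_k$ is governed by the $(q/2)$-th moment of the derivative cascade $\widetilde Y_{k-1}$, which is precisely where the reduction from exponent $q$ to $q/2$ is born.

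The main obstacle, and the technical heart of the argument, is to prevent the naive Minkowski estimate $\E[[Y]_n^{q/2}]\le\bigl(\sum_k\|d_k\|_q^2\bigr)^{q/2}\lesssim n^{q/2}$, which is much too weak for $q>2$. The way out is to invoke the Doob-Meyer decomposition $[Y]_n=\langle Y\rangle_n+N_n$ and treat each piece separately. The predictable part $\langle Y\rangle_n$ is a weighted geometric sum of the derivative cascade $\widetilde Y_{k-1}$, which by Abel summation rewrites as a martingale transform of $\widetilde Y$ plus bounded boundary terms; Burkholder on that transform produces another square function, now with exponent $q/4$. For the martingale remainder $N_n$, a second pass of Burkholder-Rosenthal similarly drops the exponent by a further factor of $2$. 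Iterating this $q\to q/2$ reduction $O(\log q)$ times, the exponent enters the subcritical range $(1,q)$, where the Kahane-Peyri\`ere bound closes the recursion with uniform moment estimates on all auxiliary cascades; the sum over $k=1,\dots,n$ produces a single factor $n$ with polylogarithmic corrections, yielding $\E[Y_n^q]\le Cn(\log n)^{O(1)}$.

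For the matching lower bound I use the self-similar recursion $Y_n\stackrel{d}{=}b^{-1}\sum_{j=1}^b W_jZ^{(j)}_{n-1}$ with iid copies $Z^{(j)}_{n-1}$ of $Y_{n-1}$ independent of the $W_j$. A second-order Taylor expansion of $x\mapsto x^q$ around $x=b=\E[\sum_jW_jZ^{(j)}_{n-1}]$, combined with the critical identity $\E[W^q]=b^{q-1}$, yields
\[
\E[Y_n^q]\;\ge\;\E[Y_{n-1}^q]+c\,b^{-2}\,\Var\Bigl(\sum_jW_jZ^{(j)}_{n-1}\Bigr)\;\ge\;\E[Y_{n-1}^q]+c',
\]
where $c'>0$ precisely because $W\not\stackrel{d}{=}W_{TC}$ prevents the relevant variance from degenerating (in the $W_{TC}$ case this lower bound fails and one obtains Theorem~\ref{thm-TC} instead). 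Iterating gives $\E[Y_n^q]\ge c'n$, and combined with the upper bound this proves $\log\E[Y_n^q]/\log n\to 1$.
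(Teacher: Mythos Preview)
Your lower-bound argument contains a concrete error. You claim that a second-order Taylor expansion of $x\mapsto x^q$ about $x=b=\E[S]$ (with $S=\sum_j W_jZ^{(j)}_{n-1}$), together with $\E[W^q]=b^{q-1}$, yields
\[
\E[Y_n^q]\ \ge\ \E[Y_{n-1}^q]+c\,b^{-2}\Var(S).
\]
But Taylor expansion of $S^q$ about $\E[S]=b$ produces at best $\E[S^q]\ge b^q+c\,\Var(S)$, hence $\E[Y_n^q]=b^{-q}\E[S^q]\ge 1+c\,b^{-q}\Var(S)$; the quantity $\E[Y_{n-1}^q]$ simply does not appear on the right. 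Already at $q=2$ the displayed inequality is false: the exact recursion is $\E[Y_n^2]=\E[Y_{n-1}^2]+(1-1/b)$, while $c\,b^{-2}\Var(S)=c\,\E[Y_{n-1}^2]-c/b\to\infty$. Your explanation that the bound fails for $W_{TC}$ because ``the relevant variance degenerates'' is also incorrect: $\Var(W_{TC})=b-1>0$, and $\Var(S)>0$ there as well. The hypothesis $W\not\stackrel{d}{=}W_{TC}$ enters elsewhere entirely, namely through Lemma~\ref{lem-strict-ineq}: it forces $\E[W^{p}]<b^{p-1}$ for every $p\in(1,q)$, which is what allows the reduction to iterate cleanly.

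The paper obtains both bounds simultaneously, and sharply ($\E[Y_n^q]\asymp_{q,b,W} n$, with no logarithmic loss), by using the $q\to q/2$ reduction of Lemma~\ref{lem-red} as a two-sided \emph{equivalence} rather than a one-sided inequality. Because $\E[W^{2^\ell}]<b^{2^\ell-1}$ for every $2^\ell<q$, the new weight $\beta$ produced at each step is uniformly comparable to a weight of the same shape (Lemma~\ref{lem-red-power}); after $\ell_0=\lfloor\log_2 q\rfloor$ iterations one reaches $\E[U_{n-\ell_0}(W,\ell_0)^{q/2^{\ell_0}}]$ with exponent in $[1,2)$, and the two-sided estimate of Lemma~\ref{lem-lessthantwo} then evaluates both the upper and the lower bound to $\sum_{m=0}^{n-\ell_0}(\E[W^q]/b^{q-1})^m=n-\ell_0+1\asymp n$. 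Your upper-bound route via the Doob--Meyer decomposition, Abel summation, and iterated square functions is considerably more circuitous, leaves several steps unspecified, and would at best recover $n(\log n)^{O(1)}$; since the reduction is already a two-sided equivalence, the separate (and flawed) lower-bound argument is unnecessary.
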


\begin{remark*}
In Theorem \ref{thm-main}, our method also yields 
\[
\limsup_{n\to\infty}    \frac{\log \E [Y_n^q] }{\log n}\le 1 \quad  \text{for \, } 1<q<2. 
\]
However, at the time of writing,  for $1<q<2$, we are not able to obtain 
\[
\liminf_{n\to\infty}    \frac{\log \E [Y_n^q] }{\log n}. 
\]
\end{remark*}

For subcritical and supercritical cases, we have 
\begin{proposition}\label{prop-main}
Assume that $W \stackrel{d}{\ne} W_{TC}$.   Let $q>1$. 
\begin{itemize}
 \item[(1)] Subcritical case: if $\E[W^q] < b^{q-1}$, then 
 \[
 \sup_{n}\E[Y_n^q]<\infty. 
 \]
 \item[(2)] Supercritical case: if $\E[W^q]> b^{q-1}$, then 
 \begin{align*}
 \liminf_{n\to\infty} \frac{\log \E[Y_n^q] }{n}\ge \log \frac{\E[W^q]}{b^{q-1}}
 \end{align*}   
  and if moreover $1<q\le 2$, then 
 \[
  \lim_{n\to\infty} \frac{\log \E[Y_n^q] }{n} =  \log \frac{\E[W^q]}{b^{q-1}}. 
 \]
 \end{itemize}
\end{proposition}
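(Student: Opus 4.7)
The plan is to exploit the branching recursion
\[
S_n := b^n Y_n = \sum_{j=0}^{b-1} W(j)\, \widetilde{S}_{n-1}^{(j)},
\]
where the $\widetilde{S}_{n-1}^{(j)}$ are independent copies of $S_{n-1}$ that are independent of $W(0),\dots,W(b-1)$, together with the martingale structure of $(Y_n)$ relative to $\mathcal{F}_n := \sigma(W(j_1,\dots,j_k): 1\le k\le n)$.

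For the supercritical lower bound, I would start from the elementary superadditivity $(a_1+\cdots+a_b)^q \ge a_1^q+\cdots+a_b^q$, valid for $a_i\ge 0$ and $q\ge 1$ (a consequence of $(1+x)^q\ge 1+x^q$). Inserting this into the recursion and taking expectations gives $\E[S_n^q]\ge b\,\E[W^q]\,\E[S_{n-1}^q]$, so iterating yields $\E[S_n^q]\ge (b\,\E[W^q])^n$ and hence $\E[Y_n^q]\ge (\E[W^q]/b^{q-1})^n$. Taking logarithms gives the stated lower bound on the $\liminf$.

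For the subcritical boundedness and the supercritical upper bound in the regime $1<q\le 2$, I would estimate the martingale differences $Q_k := Y_k - Y_{k-1}$ sharply. A direct expansion reads
\[
Q_k = b^{-k}\sum_{(j_1,\dots,j_{k-1})} W(j_1)\cdots W(j_1,\dots,j_{k-1})\,U(j_1,\dots,j_{k-1}),
\]
where $U(j_1,\dots,j_{k-1}) := \sum_{i=0}^{b-1}\bigl(W(j_1,\dots,j_{k-1},i) - 1\bigr)$ are i.i.d., centered, and independent of $\mathcal{F}_{k-1}$. The conditional von Bahr-Esseen inequality applied to the innermost weighted sum, combined with the product structure of the weights, yields
\[
\E|Q_k|^q \le C_q\,\lambda^{k-1},\qquad \lambda:=\frac{\E[W^q]}{b^{q-1}},
\]
for a constant $C_q$ depending only on $q$ and the law of $W$. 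Burkholder's martingale inequality together with the concavity of $x\mapsto x^{q/2}$ for $1<q\le 2$ then produces
\[
\E|Y_n - 1|^q \le C'_q\sum_{k=1}^n \E|Q_k|^q \le C''_q\sum_{k=1}^n \lambda^{k-1}.
\]
When $\lambda<1$ the right-hand side is bounded in $n$, proving $\sup_n\E[Y_n^q]<\infty$; when $\lambda>1$ it is $O(\lambda^n)$, so $\limsup_n n^{-1}\log\E[Y_n^q]\le\log\lambda$, matching the lower bound from the previous step.

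For the remaining subcritical case with $q>2$, I would either invoke the classical Kahane-Peyri\`ere theorem directly, or give a self-contained proof by induction on $q$, substituting the Burkholder-Rosenthal inequality for von Bahr-Esseen in the argument above. The key structural fact is that strict subcriticality propagates: since $\varphi_W$ is convex with $\varphi_W(1)=0$ and $\varphi_W(q)<0$, one has $\varphi_W(q')<0$ for every $q'\in(1,q)$, so the inductive hypothesis $\sup_n\E[Y_n^{q/2}]<\infty$ is available to absorb the quadratic-variation term produced by Burkholder-Rosenthal. The main technical obstacle lies precisely in organizing this induction: one must arrange the estimates so that the coefficient of $\E|Y_{n-1}-1|^q$ in the one-step recursion is strictly less than one, the decisive margin coming from the strict inequality $\E[W^q]<b^{q-1}$.
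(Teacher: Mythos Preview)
Your argument is correct. For the lower bound and for $1<q\le 2$, what you do is exactly the paper's Lemma~\ref{lem-lessthantwo} specialized to $Y_n=\Theta_{\TT_b}(W,\alpha_n)$: the paper phrases the bounds for a general weighted sum $\Theta_\TT(X,\alpha)$ on an arbitrary tree, but once one plugs in $\alpha_n(v)=b^{-n}\mathds{1}(v\in\LL_n)$ the lower estimate becomes your superadditivity bound $\E[Y_n^q]\ge(\E[W^q]/b^{q-1})^n$ and the upper estimate becomes your geometric sum $\sum_{k\le n}\lambda^{k}$.

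Where you diverge from the paper is the subcritical case with $q>2$. The paper does \emph{not} induct on the branching recursion as you sketch; instead it reruns the machinery behind Theorem~\ref{thm-main}. The $q\to q/2$ reduction of Lemma~\ref{lem-red} converts $\E[Y_n^q]$ into the $q/2$-moment of a new weighted sum $U_{n-1}(W,1)=\sum_{k\le n-1}\sum_{v\in\LL_k}b^{-2k}\prod_{u\preceq v}W(u)^2$, and Lemma~\ref{lem-red-power} iterates this until the exponent lands in $[1,2)$, where Lemma~\ref{lem-lessthantwo} finishes. Your observation that $\varphi_W(q')<0$ for all $q'\in(1,q)$ is precisely Lemma~\ref{lem-strict-ineq}, and in the paper's framework it is what keeps the successive weights $\beta$ comparable to $b^{-2^{\ell+1}m}$ at each reduction step. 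Your own recursive scheme is workable in principle but needs care: a naive one-step bound $\E|Y_n-1|^q\le c\,\E|Y_{n-1}-1|^q+C$ picks up the Rosenthal constant in $c$ on top of $\lambda$, so one must organise the splitting $WY_{n-1}-1=W(Y_{n-1}-1)+(W-1)$ more delicately to force $c<1$. The paper's tree-based reduction sidesteps this entirely. Citing Kahane--Peyri\`ere for part~(1) is of course legitimate, since that is exactly their result.
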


Our method is rather elementary by appropriately applying the standard Burkholder inequalities, as well as the Burkholder-Rosenthal inequalities in martingale theory.  The main steps are the following: 
\begin{itemize}
\item  We  introuduce a very general family of weighted sums of random variables on a given tree.  See the definition  \eqref{def-Theta}. 
\item Up to a  multiplicative constant,  we reduce  the $q$-moments of  a weighted sum of random variables on trees to the $\frac{q}{2}$-moments of another weighted  sum  with similar structure. See Lemma \ref{lem-red}.  
\item By repeating the $q$ to $q/2$ reduction method, we are led to compute the $p$-moment  (with $1<p<2$) of certain weighted sum of  random variables on a $b$-adic tree.  
\item In the totally critical case $W = W_{TC}$, we first obtain the growth rate of $\E[Y_n^q]$ for dyadic exponents $q= 2^\ell$. Then, using an  interpolation-extrapolation method, we obtain the growth rate of $\E[Y_n^q]$ for general $1<q<\infty$.  See \S \ref{sec-TC}. 
\item  In the case $W\stackrel{d}{\ne} W_{TC}$. We can not apply interpolation method and need to apply  a two-sided estimation (see Lemma~\ref{lem-lessthantwo}).  It turns out that, in our situation, the a priori different upper and lower estimates in Lemma~\ref{lem-lessthantwo} coincide on the level of asymptotic order.
\end{itemize}

Throughout the paper, by $A\lesssim_{x,y} B$, we mean there exists a constant $C_{x,y}>0$ depending only on $x,y$ such that $A \le C_{x,y}B$. Similarly, by  $A \asymp_{x,y} B$, we mean  $A\lesssim_{x,y}B$ and $B\lesssim_{x,y} A$.

{\flushleft\bf Acknowledgements.} This work is supported by the National Natural Science Foundation of China (No.12288201).  YH is supported by NSFC 12131016 and 12201419,  ZW is supported by NSF of Chongqing (CSTB2022BSXM-JCX0088,
CSTB2022NSCQ-MSX0321) and FRFCU (2023CDJXY-043).

\section{The $q$ to $q/2$ reduction method}
We mention that, even if $q_{crit}$ is an integer,  it is already quite complicated for  obtaining the asymptotic growth rate of $\E[Y_n^{q_{crit}}]$ for the random variables $Y_n$ defined in \eqref{def-Yn}.   For instance, the fourth moment of $Y_n$  involves the complicated general configurations of four vertices (not necessarily distinct) in a $b$-adic tree. 

Therefore, for proving Theorem \ref{thm-main} and Theorem \ref{thm-TC},  we introduce a $q$ to $q/2$ reduction method. This method works for more general weighted sum of random variables on trees and does not depends on the structure of the back-ground tree . In a sequel to this paper, this method will be applied to study random variables on the random Galton-Watson trees.

\subsection{Statement of the reduction method}

Let $\TT$ be a rooted tree  with root-vertex  denoted by $\rho$, equipped with the natural partial order $\preceq$ (making the root-vertex the smallest one) and the graph distance $d(\cdot,\cdot)$.


By a weight on $\TT$, we mean  a collection   $\alpha = \left\{\alpha(v): v\in \TT\right\}$  of  non-negative real numbers indexed by vertices of $\TT$.   Given any non-negative random variable $X$, not necessarily satisfying  the assumption $\E[X] = 1$, we let $\left\{X(v): v\in \TT\setminus\{\rho\}\right\}$ be a family of independent copies of $X$. For convenience,  set 
\[
X(\rho)\equiv 1.
\]  Define  a weighted sum as 
\begin{align}\label{def-Theta}
\Theta_{\TT}(X, \alpha): = \sum_{v\in \TT} \alpha(v)\prod_{u\preceq v} X(u). 
\end{align}

For any $v\in \TT$, denote  
\[
\TT(v)= \{u\in \TT: u \succeq v \} \an \mathcal{C}(v)= \{u\in \TT: u\succeq v \an  d(u,v)=1\}.
\]
That is,  $\TT(v)$ is  sub-tree of $\TT$ starting from $v$  and $\CC(v)$ is the set of all children of $v$.

\begin{lemma}\label{lem-red}
For  $q\geq 2$ and any  non-negative random variable $X$ with  $\E[X^q]<\infty$, we have
\begin{align}\label{eqn-main-inequality}
\mathbb{E}[\Theta_{\TT}(X, \alpha)^q] \asymp_{q, X} \E[\Theta_{\TT}(X^2,\beta)^{q/2}]
\end{align}
with $\beta: \TT\rightarrow \R_{+}$ being a new weight given by 
\begin{align}\label{eqn-new}
\beta(\rho)=\Big(\sum_{x\in \TT} \alpha(x)(\mathbb{E}[X])^{d(x, \rho)}\Big)^2+ \Var(X) \sum_{x\in \mathcal{C}(\rho)}\Big(\sum_{y\in \TT(x)}\alpha(y)   (\mathbb{E}[X])^{d(x,y)} \Big)^2
\end{align}
and for  any $v\ne \rho$,  
\begin{align}\label{eqn-new-weight}
\beta(v)= \Var(X) \sum_{x\in  \mathcal{C}(v)}\Big(\sum_{y\in \TT(x)}\alpha(y)  (\E[X])^{d(x,y)}  \Big)^2. 
\end{align}
The constants in \eqref{eqn-main-inequality} depend only on $q$ and  the quantity $\| X - \E[X]\|_q/\sqrt{\Var(X)}$.   More precisely,  there exist constants $c_q,  C_q$ depending only on $q$ such that 
\[
c_q  \cdot \E[  \Theta_\TT(X^2, \beta) ^{q/2}] \le \E[ \Theta_\TT (X, \alpha)^q ]   \le   C_q\frac{\| X- \E[X] \|_q^q}{\Var(X)^{q/2}}  \cdot \E[  \Theta_\TT(X^2, \beta) ^{q/2}]. 
\]
\end{lemma}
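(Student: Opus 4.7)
The plan is to attach a Doob martingale to $\Theta_\TT(X,\alpha)$, read off its increments, and apply the Burkholder-Rosenthal inequality for $q \ge 2$; the conditional quadratic variation of this martingale will turn out to coincide (up to an additive $M_0^2$) with $\Theta_\TT(X^2, \beta)$. Writing $|v|:=d(v,\rho)$, I would work with the level filtration $\mathcal{F}_n := \sigma(X(v) : |v| \le n)$, the Doob martingale $M_n := \E[\Theta_\TT(X,\alpha)\mid\mathcal{F}_n]$, and the shorthand
\[
P_w := \prod_{u\preceq w} X(u),\qquad B_v := \sum_{y\in\TT(v)}\alpha(y)(\E X)^{d(v,y)}.
\]
A direct computation based on $B_w=\alpha(w)+\E[X]\sum_{x\in\CC(w)} B_x$, telescoping $M_n$ level by level, gives $M_0=B_\rho$ and
\[
D_n := M_n - M_{n-1} = \sum_{|x|=n} P_{\mathrm{par}(x)}\,B_x\,(X(x)-\E[X]),
\]
in which the very weights $B_x$ that feature in the definition of $\beta$ arise intrinsically. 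Since the $X(x)$ with $|x|=n$ are mean-centered and independent, the cross terms in $D_n^2$ vanish under $\E[\cdot\mid\mathcal{F}_{n-1}]$, and summing in $n$ one recognizes
\[
\sum_n \E[D_n^2\mid\mathcal{F}_{n-1}] = \Theta_\TT(X^2,\gamma),\qquad \gamma(v):=\Var(X)\sum_{x\in\CC(v)}B_x^2.
\]
Comparing with the stated formulae for $\beta$ then shows $\beta=\gamma$ off $\rho$ and $\beta(\rho)=\gamma(\rho)+M_0^2$, whence $\Theta_\TT(X^2,\beta)=M_0^2+\Theta_\TT(X^2,\gamma)$.

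For the lower bound I would invoke only the conditional-quadratic-variation side of Burkholder-Rosenthal, namely $\|M_\infty-M_0\|_q^q \gtrsim_q \E\Theta_\TT(X^2,\gamma)^{q/2}$, and combine it with the trivial bound $\|M_\infty\|_q\ge \E M_\infty=M_0$ coming from non-negativity of $\Theta_\TT(X,\alpha)$. The elementary estimate $(a+b)^{q/2}\le 2^{q/2-1}(a^{q/2}+b^{q/2})$ then converts $M_0^q+\E\Theta_\TT(X^2,\gamma)^{q/2}$ into a lower bound for $\E\Theta_\TT(X^2,\beta)^{q/2}$ with $q$-only constant, producing $c_q$.

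The upper bound starts from $\|M_\infty-M_0\|_q^q\lesssim_q \E\Theta_\TT(X^2,\gamma)^{q/2}+\sum_n\E|D_n|^q$, again by Burkholder-Rosenthal. Conditionally on $\mathcal{F}_{n-1}$ the coefficients $c_x:=P_{\mathrm{par}(x)}B_x$ ($|x|=n$) are deterministic and the $X(x)-\E[X]$ are i.i.d.\ mean-zero, so the conditional Rosenthal inequality gives
\[
\E[|D_n|^q\mid\mathcal{F}_{n-1}]\lesssim_q \Bigl(\Var(X)\sum_{|x|=n}c_x^2\Bigr)^{q/2}+\|X-\E X\|_q^q\sum_{|x|=n}c_x^q.
\]
The elementary inequality $\sum_x c_x^q\le(\sum_x c_x^2)^{q/2}$ (valid for $q\ge 2$ and $c_x\ge 0$) absorbs the second Rosenthal summand into the first at the price of a factor $\|X-\E X\|_q^q/\Var(X)^{q/2}$; summing in $n$ and applying the twin inequality $\sum_n a_n^{q/2}\le(\sum_n a_n)^{q/2}$ (since $q/2\ge 1$) collapses the total back onto $\E\Theta_\TT(X^2,\gamma)^{q/2}\le \E\Theta_\TT(X^2,\beta)^{q/2}$, producing the advertised constant $C_q\|X-\E X\|_q^q/\Var(X)^{q/2}$. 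The main obstacle is precisely this jump-term analysis: Rosenthal's inequality naturally produces a separate $\ell^q$-type sum over the independent atoms, and without the two concavity-of-powers inequalities above one cannot fold it back into the same quadratic-variation quantity that the lower bound already controls, which is exactly what yields the genuine $\asymp$ with the clean constant claimed in the statement.
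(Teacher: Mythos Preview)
Your proposal is correct and follows essentially the same route as the paper: the same level filtration and Doob martingale, the same identification $s(M)^2=\Theta_\TT(X^2,\beta)$ via the increment formula $D_n=\sum_{|x|=n}\kappa(x)\,\mathring{X}(x)\prod_{u\prec x}X(u)$, and the same Burkholder--Rosenthal framework with the jump term folded back into the conditional square function via $\sum c_x^q\le(\sum c_x^2)^{q/2}$ and $\sum_n a_n^{q/2}\le(\sum_n a_n)^{q/2}$. The only cosmetic differences are that the paper keeps $D_0=M_0$ in the increment sequence (so $s(M)^2$ already contains $M_0^2$) rather than treating it separately, and bounds $\E[|D_m|^q\mid\FF_{m-1}]$ by first applying Burkholder/BDG to pass to $\E[(\sum_x \mathring{X}(x)^2 c_x^2)^{q/2}\mid\FF_{m-1}]$ and then the triangle inequality in $L^{q/2}$, arriving directly at $\|\mathring X\|_q^q(\sum_x c_x^2)^{q/2}$ without the intermediate two-term Rosenthal form.
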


For $1\le q\le 2$,  we obtain in Lemma \ref{lem-lessthantwo} the  two-sided moment-estimates  for $\Theta_\TT(X,\alpha)$.

\begin{lemma}\label{lem-lessthantwo}
 For any $1\leq q \leq 2$, we have
 \begin{align*}
 \sum_{v\in \TT} (\mathbb{E}[X^q])^{d(v,\rho)} \alpha(v)^q \le \mathbb{E}[\Theta_{\TT}(X, \alpha)^q]
\le C_q \sum_{v\in \TT}(\E[X^q])^{d(v,\rho)}  \kappa(v)^q,
\end{align*}
where $C_q>0$ is a constant depending only on $q$ and $\kappa: \TT \rightarrow \R_{+}$  (which depends on $\alpha, X, \TT$) is defined by :  
\begin{align}\label{def-kappa-tt}
\kappa(v) = \kappa_{\alpha, X, \TT}(v): =  \sum_{y\in \TT(v)} \left(\mathbb{E}[X]\right)^{d(v,y)}\alpha(y), \quad \forall v  \in \TT. 
\end{align}
\end{lemma}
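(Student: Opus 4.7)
The \emph{lower bound} is the easy half. For any non-negative numbers $a_1, \ldots, a_N$ and $q \ge 1$, setting $p_i := a_i/\sum_j a_j \in [0,1]$ yields $\sum_i p_i^q \le \sum_i p_i = 1$, so $\big(\sum_i a_i\big)^q \ge \sum_i a_i^q$. Applied term-by-term to the non-negative expansion of $\Theta_\TT(X,\alpha)$ and followed by taking expectations, together with $X(\rho) \equiv 1$ and the independence of the remaining factors along any path from $\rho$ to $v$, this gives
\[
\E[\Theta_\TT(X,\alpha)^q] \;\ge\; \sum_{v \in \TT} \alpha(v)^q \, \E\Big[\prod_{u \preceq v} X(u)^q\Big] \;=\; \sum_{v \in \TT} \alpha(v)^q \,(\E[X^q])^{d(v,\rho)}.
\]

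For the \emph{upper bound}, I would apply a martingale argument along the levels of $\TT$ (reducing to the case of a finite tree by a standard truncation). Set $\FF_n := \sigma(X(u) : d(u,\rho) \le n)$ and consider the Doob martingale $M_n := \E[\Theta_\TT(X,\alpha) \mid \FF_n]$, which terminates at $\Theta_\TT(X,\alpha)$ once $n$ exceeds the depth. Using the recursion $\kappa(v) = \alpha(v) + \E[X] \sum_{c \in \CC(v)} \kappa(c)$, a direct computation gives $M_0 = \kappa(\rho)$ and the clean expression
\[
D_n \;:=\; M_n - M_{n-1} \;=\; \sum_{v : d(v,\rho) = n-1} \Big(\prod_{u \preceq v} X(u)\Big) \sum_{c \in \CC(v)} \big(X(c) - \E[X]\big)\kappa(c).
\]

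For $1 < q \le 2$ I would invoke Burkholder's inequality $\|M_N - M_0\|_q \lesssim_q \big\|(\sum_n D_n^2)^{1/2}\big\|_q$ and then use the subadditivity $(\sum_n D_n^2)^{q/2} \le \sum_n |D_n|^q$ (valid since $q/2 \le 1$) to reduce matters to $\E[|M_N - M_0|^q] \lesssim_q \sum_n \E[|D_n|^q]$. Now fix $n$ and condition on $\FF_{n-1}$: the products $A(v) := \prod_{u \preceq v} X(u)$ become constants while the inner brackets $B(v) := \sum_{c \in \CC(v)} \kappa(c)(X(c) - \E[X])$ are mean-zero and independent across distinct $v$ at level $n-1$ (their relevant $X(c)$'s live in disjoint sets of children). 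Applying the Von Bahr-Esseen inequality once to the outer sum and once inside each $B(v)$, together with $\E[|X - \E[X]|^q] \le 2^q \E[X^q]$, collapses the estimate to
\[
\E[|D_n|^q] \;\lesssim_q\; \sum_{c: d(c,\rho) = n} (\E[X^q])^{d(c,\rho)} \kappa(c)^q.
\]
Summing over $n \ge 1$ and absorbing $\|M_0\|_q^q = \kappa(\rho)^q$ as the $v = \rho$ contribution yields the claimed upper bound. The case $q = 1$ is immediate since $\E[\Theta_\TT(X,\alpha)] = \kappa(\rho)$ is already dominated by the $v = \rho$ term of the right-hand side.

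The main technical obstacle is ensuring that the constant $C_q$ depends only on $q$ and not on $\TT$. A naive child-by-child recursion on $F(v) := \E[\Theta_{\TT(v)}(X,\alpha)^q]$, obtained by combining $(A+B)^q \le 2^{q-1}(A^q + B^q)$ with Von Bahr-Esseen, produces an estimate of the form $F(v) \le A_q \kappa(v)^q + B_q \E[X^q] \sum_{c \in \CC(v)} F(c)$ with $B_q > 1$; iterating it then forces a multiplicative factor $B_q^{d(v,\rho)}$ that blows up with the depth. The level-by-level martingale decomposition above replaces this multiplicative accumulation by the additive sum $\sum_n \|D_n\|_q^q$, which is exactly what is needed to keep the constant uniform in the tree.
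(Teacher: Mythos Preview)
Your proof is correct and follows the same route as the paper's: the lower bound via $(\sum a_i)^q \ge \sum a_i^q$, and the upper bound via the level filtration, Burkholder's inequality, and the subadditivity $(\sum_n D_n^2)^{q/2} \le \sum_n |D_n|^q$. The only cosmetic difference is that, at each fixed level, the paper bounds $\E[|D_m|^q\mid\FF_{m-1}]$ by a second application of the one-sided Burkholder--Davis--Gundy inequality to the conditionally independent sum $\sum_{x\in\LL_m}\kappa(x)\mathring{X}(x)\prod_{u\prec x}X(u)$ followed by subadditivity of $|\cdot|^{q/2}$, whereas you group by parent and invoke Von Bahr--Esseen twice; both devices yield the identical bound $\E[|D_m|^q]\lesssim_q(\E[X^q])^m\sum_{x\in\LL_m}\kappa(x)^q$.
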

\begin{remark*}
Indeed, it is easy to see that the lower-estimates holds for all $q\ge 1$: 
\begin{align}\label{l-es}
 \sum_{v\in \TT} (\mathbb{E}[X^q])^{d(v,\rho)} \alpha(v)^q \le \mathbb{E}[\Theta_{\TT}(X, \alpha)^q]. 
\end{align}
\end{remark*}

\subsection{The proof of Lemma \ref{lem-red}}\label{sec-red}
For simplicity,  write
\[
M=\Theta_{\TT}(X,\alpha).
\]
We are going to  apply the Burkholder  and the Burkholder-Rosenthal inequalities. For this purpose,  we associate $M$ to a natural martingale as follows.  First, let $(\FF_m)_{m\ge 0}$ be the filtration of $\sigma$-algebras given  by 
\[
\FF_m : = \sigma \Big(\Big\{ X(v): v\in \TT \, \text{with}\, d(v, \rho)\le m\Big\}\Big), \quad m \ge 0.
\]
Define a martingale $(M_m)_{m\ge 0}$ by 
\begin{align*}
 M_m=\mathbb{E}[M|\mathcal{F}_m], \quad   m \ge 0. 
 \end{align*} 
The martingale increments for $(M_m)_{m\ge  0}$ are given  by
\[
D_0(M): =M_0=\mathbb{E}[M] \an 
D_m(M): =  M_m-M_{m-1} \, \text{for $m\ge 1$}.
\]
And the conditional square function is defined as
\begin{align}\label{def-sm}
s(M)^2: =M_0^2+\sum_{m\ge 1}\E[D_m(M)^2|\mathcal{F}_{m-1}]. 
\end{align}

 Recall the definition  \eqref{def-Theta} for weighted sums.  
\begin{lemma}\label{prop-alpha-beta}
The conditional square function \eqref{def-sm} is again a weighted sum on $\TT$:
\begin{align}\label{eqn-conditionl-square}
s(\Theta_\TT(X, \alpha))^2  = s(M)^2 = \Theta_\TT(X^2, \beta),
\end{align}
where  $\beta: \TT\rightarrow\R_{+}$ is a new weight modified from the original weight $\alpha$ and  given explicitly by the formulas \eqref{eqn-new} and \eqref{eqn-new-weight}.
\end{lemma}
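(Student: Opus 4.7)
The plan is to compute the conditional expectations $M_m$ explicitly, extract a clean product formula for the martingale increments $D_m(M)$, and then exploit orthogonality of the independent centered variables $\{X(x)-\E[X]\}_{d(x,\rho)=m}$ to collapse the conditional square function to a weighted diagonal sum over the tree.

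First I would unfold $M=\sum_{v\in\TT} \alpha(v) Z_v$ with $Z_v:=\prod_{u\preceq v}X(u)$, and split according to whether $d(v,\rho)\le m$ or $d(v,\rho)>m$. For the latter, if $a_m(v)$ denotes the unique ancestor of $v$ at depth $m$, independence of $\{X(u):d(u,\rho)>m\}$ from $\FF_m$ gives $\E[Z_v\mid\FF_m]=(\E[X])^{d(a_m(v),v)}Z_{a_m(v)}$. Grouping by this ancestor and using the auxiliary weight
\[
\kappa(w):=\sum_{y\in\TT(w)}(\E[X])^{d(w,y)}\alpha(y),\qquad w\in\TT,
\]
already featured in Lemma \ref{lem-lessthantwo}, one arrives at the compact formula
\[
M_m=\sum_{v:\,d(v,\rho)<m}\alpha(v)Z_v+\sum_{w:\,d(w,\rho)=m}\kappa(w)Z_w.
\]
In particular $M_0=\kappa(\rho)=\E[M]$.

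Next I would compute the increment. The algebraic lynchpin is the one-step recursion
\[
\kappa(w)=\alpha(w)+\E[X]\sum_{x\in\CC(w)}\kappa(x),
\]
obtained by splitting $\TT(w)=\{w\}\cup\bigcup_{x\in\CC(w)}\TT(x)$. Substituting it into $M_m-M_{m-1}$ telescopes the $\alpha(w)Z_w$ contributions at depth $m-1$, and after using $Z_x=X(x)Z_w$ for $x\in\CC(w)$ we obtain
\[
D_m(M)=\sum_{w:\,d(w,\rho)=m-1}Z_w\sum_{x\in\CC(w)}\kappa(x)\bigl(X(x)-\E[X]\bigr).
\]

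Finally, squaring $D_m(M)$ and conditioning on $\FF_{m-1}$: the variables $\{X(x)-\E[X]\}_{d(x,\rho)=m}$ are jointly independent of $\FF_{m-1}$, have mean zero and variance $\Var(X)$, and the sets $\CC(w)$ for distinct $w$ at depth $m-1$ are disjoint. Hence only the diagonal terms survive, yielding
\[
\E[D_m(M)^2\mid\FF_{m-1}]=\Var(X)\sum_{w:\,d(w,\rho)=m-1}Z_w^2\sum_{x\in\CC(w)}\kappa(x)^2.
\]
Summing over $m\ge 1$ and adding $M_0^2=\kappa(\rho)^2$, and observing that each $Z_w^2=\prod_{u\preceq w}X(u)^2$ is precisely the root-to-$w$ product in the variables $X(v)^2$, the right-hand side of \eqref{def-sm} is exactly the weighted sum $\Theta_\TT(X^2,\beta)$ with the root weight $\beta(\rho)=\kappa(\rho)^2+\Var(X)\sum_{x\in\CC(\rho)}\kappa(x)^2$ absorbing $M_0^2$, and $\beta(v)=\Var(X)\sum_{x\in\CC(v)}\kappa(x)^2$ for $v\ne\rho$. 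Expanding $\kappa$ via its definition recovers the formulas \eqref{eqn-new} and \eqref{eqn-new-weight}. The main obstacle is purely bookkeeping: keeping the telescoping of $M_m-M_{m-1}$ transparent and handling the root vertex separately so that the contribution $M_0^2$ is correctly embedded into $\beta(\rho)$.
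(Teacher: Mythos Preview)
Your proposal is correct and follows essentially the same route as the paper. The paper computes $M_m$ explicitly, rearranges to obtain $D_m(M)=\sum_{x\in\LL_m}\kappa(x)\,\mathring{X}(x)\prod_{u\prec x}X(u)$ (Lemma~\ref{lem-Dm-exp}), uses orthogonality of the $\mathring{X}(x)$ to compute $\E[D_m(M)^2\mid\FF_{m-1}]$, and then regroups the result by the parent $v\in\LL_{m-1}$ (Lemma~\ref{lem-cond-sq}); your use of the recursion $\kappa(w)=\alpha(w)+\E[X]\sum_{x\in\CC(w)}\kappa(x)$ reaches the depth-$(m{-}1)$ grouping of $D_m(M)$ in one step rather than two, but the content is identical.
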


To write more explicitly the sequence of martingale increments, we introduce the following notations.  For any $n\geq 0$, set \[
\mathcal{L}_n:=\{v\in\TT: d(v,\rho)=n\}. 
\]
 Moreover,  for  any $v \in \mathcal{L}_n$ and $m \le n$,    let $v_m$ be the unique vertex with 
\begin{align}\label{def-vm}
v \in \TT(v_m)  \an v_m \in \LL_m. 
\end{align}
 In the usual language in graph theory,  the above $\mathcal{L}_n$  is the sphere with radius  $n$ from the root $\rho$ and  $v_m$ is the unique ancestor of $v$ in $\mathcal{L}_m$. 

Using the above notation and writing
\begin{align}\label{def-M}
M =  \sum_{n\ge0} \sum_{v\in \mathcal{L}_n} \alpha(v)\prod_{u\preceq v} X(u), 
\end{align}
we obtain  
 \begin{align*} M_m=\sum_{n=0}^{m}\sum_{v\in \mathcal{L}_n}\alpha(v)\prod_{u\preceq v}X(u)+\sum_{n\ge m+1}\sum_{v\in \mathcal{L}_n}\alpha(v)\left(\mathbb{E}[X]\right)^{n-m}\prod_{u\preceq v_m} X(u). 
 \end{align*} 
Then it can be easily seen that the martingale increments for $(M_m)_{m\ge  0}$ are given by
\[
M_0=\sum_{n\ge 0} \sum_{v\in \LL_n} \alpha(v)(\E[X])^n
\]
and for $m\ge 1$, 
\begin{align}\label{D-m-ori-ex}
D_m(M)=  \sum_{n\ge m}\sum_{v\in \LL_n}\alpha(v)\left(\mathbb{E}[X]\right)^{n-m}  \mathring{X}(v_m) \prod_{u\prec v_{m}} X(u),
\end{align}
     where $\mathring{X}(v_m)$ is the centered random variable defined as 
  \[
 \mathring{X}(v_m): =  X(v_m) - \E[X(v_m)] = X(v_m) - \E[X]. 
  \]

 In Lemma  \ref{eqn-martingale-increment} below, we give an elementary but  useful   expression \eqref{eqn-martingale-increment} for $D_m(M)$. It is just a rearrangement of the terms in the sum of $D_m(M)$. 
  \begin{lemma}\label{lem-Dm-exp}
  For any $m \ge 1$, the martingale increment $D_m(M)$ has the expression: 
\begin{align}\label{eqn-martingale-increment}
D_m(M) =  \sum_{v\in \LL_m}  \Big(\mathring{X}(v)  \prod_{u\prec v} X(u)\Big)\sum_{n \ge m} \left(\mathbb{E}[X]\right)^{n-m}\sum_{w\in \LL_n \cap \TT(v)}\alpha(w).
\end{align}
  \end{lemma}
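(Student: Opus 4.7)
The content of Lemma~\ref{lem-Dm-exp} is purely a rearrangement of summation in the already-established formula \eqref{D-m-ori-ex}, so the plan is to carry out precisely that rearrangement and verify that no index is mishandled.

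Starting from
\[
D_m(M)=\sum_{n\ge m}\sum_{v\in \LL_n}\alpha(v)\left(\mathbb{E}[X]\right)^{n-m}\mathring{X}(v_m)\prod_{u\prec v_{m}}X(u),
\]
the key observation is that the product $\mathring{X}(v_m)\prod_{u\prec v_m}X(u)$ depends on $v$ only through its level-$m$ ancestor $v_m$, while the scalar weight $\alpha(v)(\E[X])^{n-m}$ is the only piece that truly depends on $v$ itself. Thus the natural move is to fix an ancestor $v'\in \LL_m$ first, group all descendants $v$ having $v_m=v'$, and sum the scalars separately.

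The formal step is the bijective reindexing
\[
\{(n,v):\, n\ge m,\; v\in\LL_n\}\;\longleftrightarrow\;\{(v',n,v):\, v'\in \LL_m,\; n\ge m,\; v\in\LL_n\cap\TT(v')\},
\]
which uses the defining property \eqref{def-vm}: for each $v\in \LL_n$ with $n\ge m$ there is a \emph{unique} $v'\in\LL_m$ with $v\in\TT(v')$, namely $v'=v_m$, and conversely every $v\in\LL_n\cap\TT(v')$ has $v_m=v'$. After this reindexing, the factor $\mathring{X}(v_m)\prod_{u\prec v_m}X(u)=\mathring{X}(v')\prod_{u\prec v'}X(u)$ no longer depends on $n$ or on the inner index $v$, and can be pulled outside the inner two sums, yielding
\[
D_m(M)=\sum_{v'\in\LL_m}\Bigl(\mathring{X}(v')\prod_{u\prec v'}X(u)\Bigr)\sum_{n\ge m}(\E[X])^{n-m}\sum_{v\in\LL_n\cap\TT(v')}\alpha(v),
\]
which, after renaming $v'\mapsto v$ and the inner $v\mapsto w$, is exactly \eqref{eqn-martingale-increment}.

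There is no real obstacle: the identity does not require convergence issues (all terms are nonnegative when paired with $|\mathring X|$, and for each finite truncation of $\TT$ the sum is finite), and the only mild care needed is to confirm that $v_m$ ranges over \emph{all} of $\LL_m$ as $v$ ranges over $\bigcup_{n\ge m}\LL_n$, which is immediate from the tree structure (every $v'\in\LL_m$ is its own image under the projection $v\mapsto v_m$). Hence the lemma follows from a single application of Fubini for nonnegative rearrangements together with the definition \eqref{def-vm}.
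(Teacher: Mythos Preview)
Your proof is correct and follows essentially the same approach as the paper's own proof: both start from \eqref{D-m-ori-ex}, observe that the random factor depends only on the level-$m$ ancestor $v_m$, and then reindex by first fixing $v_m\in\LL_m$ and summing over its descendants. The paper abbreviates the argument by setting $F(v)=\mathring{X}(v)\prod_{u\prec v}X(u)$ and $\LL_{\ge m}=\bigcup_{n\ge m}\LL_n$, but the substance is identical to your rearrangement.
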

  \begin{proof}
 First write 
 \[
F(v): =  \mathring{X}(v)  \prod_{u\prec v} X(u) \an \LL_{\ge m}: = \bigcup_{n\ge m} \LL_n. 
 \]
Then  by \eqref{D-m-ori-ex}, 
  \begin{align*}
D_m(M)   =&    \sum_{n\ge m}\sum_{v\in \LL_n}\alpha(v)\left(\mathbb{E}[X]\right)^{n-m} F(v_m) =    \sum_{v\in  \LL_{\ge m}}\alpha(v)\left(\mathbb{E}[X]\right)^{d(v,\rho)-m}  F(v_m) 
  \\
  = & \sum_{x \in \LL_m}  F(x) \sum_{v\in \LL_{\ge m}, \, \, v_m = x }  \alpha(v)\left(\mathbb{E}[X]\right)^{d(v,\rho)-m}   
  \\
  = & \sum_{x \in \LL_m}  F(x)  \sum_{n\ge m}  \sum_{v \in \LL_n \cap \TT(x) } \alpha(v)\left(\mathbb{E}[X]\right)^{n-m} . 
  \end{align*}
  This completes the proof of the lemma. 
\end{proof}

Recall the definition of the function $\kappa$ defined in \eqref{def-kappa-tt}. We re-write $\kappa$ in the following way: for any $m \ge 1$ and any $x\in \LL_m$, 
\begin{align}\label{def-kappav}
\kappa(x): = \sum_{n\ge m} \left(\mathbb{E}[X]\right)^{n-m}\sum_{y\in \LL_n \cap \TT(x)}\alpha(y). 
\end{align}

\begin{lemma}\label{lem-cond-sq}
The conditional square function $s(M)$ satisfies the following equality:  
\begin{align*}
s(M)^2=&\Big(\sum_{x\in \TT} \alpha(x)(\E[X])^{d(x,\rho)}\Big)^2+\Var(X) \sum_{x\in \CC(\rho)}\Big(\sum_{n\ge 1} (\mathbb{E}[X])^{n-1}\sum_{y  \in \LL_n \cap \TT(x) }\alpha(y)\Big)^2\\
& + \Var(X)\sum_{m\ge2}\sum_{v\in \LL_{m-1}} \prod_{u\preceq v}X(u)^2\sum_{x\in  \CC(v)}\Big(\sum_{n\ge m} \left(\mathbb{E}[X]\right)^{n-m}\sum_{y\in \LL_n \cap \TT(x)}\alpha(y)\Big)^2.
\end{align*}
\end{lemma}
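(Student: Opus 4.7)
The plan is to compute $s(M)^2$ directly from its definition \eqref{def-sm} by evaluating $M_0^2$ and then, for each $m \ge 1$, the conditional expectation $\E[D_m(M)^2 \mid \FF_{m-1}]$ using the explicit formula \eqref{eqn-martingale-increment} provided by Lemma \ref{lem-Dm-exp}.

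First I would handle $M_0$. Since $M_0 = \E[M]$, expanding \eqref{def-M} and using $\E[X(u)]=\E[X]$ for every $u\ne \rho$ (together with $X(\rho)\equiv 1$) yields
\[
M_0 \;=\; \sum_{n\ge 0}\sum_{v\in\LL_n}\alpha(v)(\E[X])^n \;=\; \sum_{x\in\TT}\alpha(x)(\E[X])^{d(x,\rho)},
\]
which, upon squaring, produces the first term on the right-hand side of the claim.

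Next I would turn to the conditional square-function terms. By Lemma \ref{lem-Dm-exp} and the notation \eqref{def-kappav},
\[
D_m(M) \;=\; \sum_{v\in \LL_m} \mathring{X}(v)\Bigl(\prod_{u\prec v}X(u)\Bigr)\kappa(v).
\]
For each $v\in\LL_m$, the product $\prod_{u\prec v}X(u)$ is $\FF_{m-1}$-measurable, while the centered variables $\{\mathring X(v):v\in\LL_m\}$ are i.i.d., independent of $\FF_{m-1}$, with $\E[\mathring X(v)]=0$ and $\E[\mathring X(v)^2]=\Var(X)$. Consequently, all cross terms in $D_m(M)^2$ have zero conditional expectation, and I obtain
\[
\E[D_m(M)^2\mid \FF_{m-1}] \;=\; \Var(X)\sum_{v\in\LL_m}\Bigl(\prod_{u\prec v}X(u)\Bigr)^{\!2}\kappa(v)^2.
\]

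Finally I would split the sum $\sum_{m\ge 1}\E[D_m(M)^2\mid\FF_{m-1}]$ into its $m=1$ piece and the tail $m\ge 2$. For $m=1$, the vertices $v\in\LL_1$ are exactly the children of $\rho$, and $\prod_{u\prec v}X(u)=X(\rho)=1$, so this contribution is precisely the second term in the claim after substituting the definition \eqref{def-kappav} of $\kappa(v)$. For $m\ge 2$, each $v\in\LL_m$ is the child of a unique $v'\in\LL_{m-1}$, so re-indexing by $(v',x)$ with $v'\in\LL_{m-1}$ and $x\in\CC(v')$ gives $\prod_{u\prec x}X(u)=\prod_{u\preceq v'}X(u)$, and again substituting \eqref{def-kappav} yields exactly the third term of the claim. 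There is no substantive obstacle here; the only point requiring mild care is the vanishing of the cross terms (an immediate consequence of independence and centering) and the re-parametrization of $\LL_m$ by parent-child pairs.
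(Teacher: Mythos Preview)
Your proposal is correct and follows essentially the same approach as the paper: compute $M_0$ directly, use the expression \eqref{eqn-martingale-increment} for $D_m(M)$ together with the orthogonality of the centered variables $\{\mathring X(v):v\in\LL_m\}$ conditionally on $\FF_{m-1}$ to obtain $\E[D_m(M)^2\mid\FF_{m-1}]=\Var(X)\sum_{v\in\LL_m}\kappa(v)^2\prod_{u\prec v}X(u)^2$, and then re-index $\LL_m$ by parent--child pairs to pull out the factor $\prod_{u\preceq v'}X(u)^2$. The paper's proof is identical in substance, differing only in that it records the orthogonality relation explicitly before using it.
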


\begin{proof}
If $m=0$, then
\[
M_0 = \E[M] = \sum_{x\in \TT} \alpha(x)(\E[X])^{d(x,\rho)}.
\]
Now assume $m\ge 1$.  Applying the orthognality: for any $\FF_{m-1}$-measurable random variable $Z$, 
\[
\E [\mathring{X}(x) \mathring{X}(x') Z |\FF_{m-1}] = 0\quad \text{for any distinct\,} x, x'\in \LL_m,
\]
 we obtain  
\begin{align}\label{Dm-cond}
\begin{split}
\E[D_m(M)^2|\FF_{m-1}] = &  \E\Big[   \Big( \sum_{x\in \LL_m} \kappa(x)   \mathring{X}(x)  \prod_{u\prec x} X(u)  \Big)^2 \Big|\FF_{m-1}\Big]
\\
= &   \sum_{x\in \LL_m}  \E\Big[     \kappa(x)^2 \mathring{X}(x)^2  \prod_{u\prec x} X(u)^2  \Big|\FF_{m-1}\Big]
\\ 
= &  \Var(X) \sum_{x\in \LL_m}   \kappa(x)^2  \prod_{u\prec x} X(u)^2.
\end{split}
\end{align}
Recall the notation \eqref{def-vm}.  Note that,  for any $x \in \LL_m$,  
\[
u\prec x  \text{\, if and only if \,} u \preceq x_{m-1}. 
\]
Hence 
\begin{align*}
\E[D_m(M)^2|\FF_{m-1}] = &  \Var(X) \sum_{x\in \LL_m}   \kappa(x)^2  \prod_{u\preceq x_{m-1}} X(u)^2 
\\
  = &   \Var(X)  \sum_{v\in \LL_{m-1}} \sum_{x\in \LL_m \atop x_{m-1} = v}   \kappa(x)^2  \prod_{u\preceq v} X(u)^2 
  \\
  = & \Var(X)  \sum_{v\in \LL_{m-1}}  \sum_{x\in \CC(v)}   \kappa(x)^2 \prod_{u\preceq v} X(u)^2 .
\end{align*}
This completes the proof of the lemma. 
\end{proof}

\begin{proof}[Proof of Lemma \ref{prop-alpha-beta}]
By Lemma \ref{lem-cond-sq}, 
\begin{align*}
s(M)^2=&\underbrace{\Big(\sum_{x\in \TT} \alpha(x)(\E[X])^{d(x,\rho)}\Big)^2+\Var(X) \sum_{x\in \CC(\rho)}\kappa(x)^2}_{\text{this term coincides with $\beta(\rho)$}}\\
& + \sum_{m\ge2} \sum_{v\in \LL_{m-1}}  \Big(\prod_{u\preceq v}X(u)^2\Big) \underbrace{ \Var(X) \sum_{x\in  \CC(v)}\kappa(x)^2}_{\text{this term coincides with $\beta(v)$}}
\\
 = & \beta(\rho) +  \sum_{v\in \TT\setminus \rho}  \beta(v) \prod_{u\preceq v}X(u)^2 
 \\
 = & \Theta_\TT(X^2, \beta).
\end{align*}
This completes the proof of Lemma \ref{prop-alpha-beta}. 
\end{proof}

For any $q \ge 2$, the Burkholder-Rosenthal inequality (see, e.g., \cite[Theorem 5.52, p. 192]{Martingales-Pisier-book}) says that 
\begin{align}\label{ineq-BR}
\|M\|_q  \asymp_q    \Big(\sum_{m\ge 0} \E[|D_{m}(M)|^{q}] \Big)^{1/q}+ \| s(M)\|_q.
\end{align}

 \begin{lemma}\label{lem-control} For any $q\geq 2$, 
\begin{align}\label{BR-diag}
 \Big(\sum_{m\ge 0} \E[|D_{m}(M)|^{q}] \Big)^{1/q}  \leq C_q \frac{\| \mathring{X}\|_q}{\sqrt{\Var(X)}}  \cdot \|s(M)\|_q, 
 \end{align}
  where $C_q$ is a numerical constant  depending only  on $q$ (in fact depending only on the constant appears in the $L^q$-version Burkholder inequality).
\end{lemma}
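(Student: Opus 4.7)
The plan is to apply the $L^q$-Burkholder inequality conditionally on $\FF_{m-1}$, exploiting the fact (visible from Lemma~\ref{lem-Dm-exp}) that the only new randomness contributing to $D_m(M)$ at level $m$ lies in the i.i.d.\ centered variables $\{\mathring X(v):v\in\LL_m\}$. First I rewrite \eqref{eqn-martingale-increment} as
$$
D_m(M)=\sum_{v\in \LL_m} c_v\, \mathring X(v), \qquad c_v:=\kappa(v)\prod_{u\prec v}X(u),
$$
in which each weight $c_v$ is $\FF_{m-1}$-measurable and the family $\{\mathring X(v)\}_{v\in \LL_m}$ is independent of $\FF_{m-1}$. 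After choosing any enumeration of $\LL_m$, the partial sums form a martingale with respect to the filtration obtained by appending the $\mathring X(v)$'s one at a time to $\FF_{m-1}$; the usual $L^q$-Burkholder inequality then applies conditionally and yields
$$
\E\bigl[|D_m(M)|^q\bigm|\FF_{m-1}\bigr]\le C_q^{\,q}\,\E\Big[\Big(\sum_{v\in\LL_m}c_v^{\,2}\mathring X(v)^2\Big)^{q/2}\Bigm|\FF_{m-1}\Big],
$$
with $C_q$ precisely the $L^q$-Burkholder constant.

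Next, because $q/2\ge 1$, I will apply the triangle inequality in $L^{q/2}(\cdot\mid\FF_{m-1})$ and use the independence of $\mathring X(v)$ from $\FF_{m-1}$ to move the norm inside the sum; this replaces $\mathring X(v)^2$ by the deterministic quantity $\|\mathring X\|_q^{\,2}$ and gives
$$
\E\bigl[|D_m(M)|^q\bigm|\FF_{m-1}\bigr]\le C_q^{\,q}\,\|\mathring X\|_q^{\,q}\,\Bigl(\sum_{v\in\LL_m}c_v^{\,2}\Bigr)^{q/2}.
$$
The identity $\Var(X)\sum_{v\in\LL_m}c_v^{\,2}=\E[D_m(M)^2\mid\FF_{m-1}]$ from \eqref{Dm-cond} then converts the right-hand side into $\Var(X)^{-q/2}(\E[D_m(M)^2\mid\FF_{m-1}])^{q/2}$, so taking expectations gives a clean bound of each $\E|D_m(M)|^q$ by $C_q^{\,q}\|\mathring X\|_q^{\,q}\Var(X)^{-q/2}\,\E\bigl[(\E[D_m(M)^2\mid\FF_{m-1}])^{q/2}\bigr]$.

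Finally, I sum over $m\ge 1$ and use the elementary inequality $\sum_m a_m^{q/2}\le(\sum_m a_m)^{q/2}$ for non-negative $a_m$ (valid because $q/2\ge 1$), combined with the definition \eqref{def-sm} which gives $\sum_{m\ge 1}\E[D_m(M)^2\mid\FF_{m-1}]\le s(M)^2$; this upgrades the conditional squares to the full conditional square function and produces
$$
\sum_{m\ge 1}\E|D_m(M)|^q\le C_q^{\,q}\,\frac{\|\mathring X\|_q^{\,q}}{\Var(X)^{q/2}}\,\E[s(M)^q].
$$
The $m=0$ term is trivial since $M_0\le s(M)$ pointwise, and it is absorbed into the constant because $\|\mathring X\|_q/\sqrt{\Var(X)}\ge 1$ for $q\ge 2$ by monotonicity of $L^p$-norms; taking $q$-th roots yields \eqref{BR-diag}. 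There is no real obstacle here: the only conceptual input is the recognition, supplied by Lemma~\ref{lem-Dm-exp}, that $D_m(M)$ is a conditional sum of independent centered variables, after which the estimate is a direct coupling of Burkholder with the $L^{q/2}$-triangle inequality.
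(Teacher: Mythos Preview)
Your proof is correct and follows essentially the same approach as the paper's: conditional Burkholder on $\FF_{m-1}$, the $L^{q/2}$-triangle inequality to extract $\|\mathring X\|_q$, then $\sum a_m^{q/2}\le(\sum a_m)^{q/2}$ to pass to $s(M)$. The only cosmetic difference is that you package $\sum_{v\in\LL_m}c_v^{\,2}$ via the identity \eqref{Dm-cond} as $\Var(X)^{-1}\E[D_m(M)^2\mid\FF_{m-1}]$ before summing, whereas the paper keeps the explicit form $\sum_v \kappa(v)^2\prod_{u\prec v}X(u)^2$ and compares against the formula \eqref{sm-qnorm} for $\|s(M)\|_q^q$; the two bookkeepings are equivalent.
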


In the proof of the upper estimate \eqref{BR-diag}, we need to use the classical Burkholder's inequalities   (see, e.g., \cite[formula (5.8) in p. 153]{Martingales-Pisier-book}), namely, for any $1<p<\infty$ and  any martingale $(\E[N| \FF_m])_{m\ge 0}$ in $L^p$, we have 
\begin{align}\label{B-ineq}
\| N\|_p \asymp_p    \Big\| \Big(\sum_{m\ge 0} D_m(N)^2\Big)^{1/2}\Big\|_p. 
\end{align}
Note also that, we shall only use  the one-sided estimate (follows the Burkholder-Davis-Gundy inequalities), which indeed holds for all $1\le p<\infty$ (see \cite{Davis-p-equal-one} for the case $p=1$):  
\begin{align}\label{BD-ineq}
\| N\|_p \lesssim_p    \Big\| \Big(\sum_{m\ge 0} D_m(N)^2\Big)^{1/2}\Big\|_p. 
\end{align}
The following classical inequalities (with $a\ge 1$ and $0<b\le 1$) will be useful  
\begin{align}\label{a-and-b}
\sum_{n} |x_n|^{a} \le \Big(\sum_{n} |x_n|\Big)^{a} \an \sum_{n} |x_n|^{b} \ge  \Big(\sum_{n} |x_n|\Big)^{b},\quad \forall x_n\in \R.
\end{align}

\begin{proof}[Proof of Lemma \ref{lem-control}]
Recall the expression  \eqref{def-kappav} for $\kappa(x)$.  Take any $m \ge1$.  Using \eqref{eqn-martingale-increment} and Burkholder's inequality \eqref{B-ineq},  and using the independence and orthogonality  of all $\mathring{X}(v)$ for $v\in \LL_m$,  we have
\begin{align}\label{eqn-complex-two}
\begin{split}
 \mathbb{E}[|D_m(M)|^q|\mathcal{F}_{m-1}]  & =\mathbb{E}\Big[\Big|  \sum_{v\in \LL_m}  \mathring{X}(v)    \kappa(v) \prod_{u\prec v} X(u) \Big|^q \Big|\mathcal{F}_{m-1} \Big] \\ & 
  \lesssim_q  \,  \mathbb{E}\Big[\Big|\sum_{v\in \LL_m}\mathring{X}(v)^2  \kappa(v)^2 \prod_{u\prec v} X(u)^2 \Big|^{q/2} \Big|\mathcal{F}_{m-1}\Big].
  \end{split}
 \end{align} 
   Since $q\ge 2$, we may apply  the triangle inequality in $L^{q/2}$ and get 
    \begin{align*}
    &\Big(  \mathbb{E}\Big[\Big|\sum_{v\in \LL_m}\mathring{X}(v)^2   \kappa(v)^2   \prod_{u\prec v} X(u)^2  \Big|^{q/2} \Big|\mathcal{F}_{m-1}\Big]   \Big)^{2/q}\\
    \le & \sum_{v\in \LL_m}   \Big(\mathbb{E}  \Big[   |\mathring{X}(v)|^q    \kappa(v)^q   \prod_{u\prec v} X(u)^q  \Big|\mathcal{F}_{m-1}\Big]\Big)^{2/q}
    \\
    = & \sum_{v\in \LL_m}   \Big(\mathbb{E}  [   |\mathring{X}(v)|^q]\Big)^{2/q} \kappa(v)^2 \prod_{u\prec v} X(u)^2   
    \\
 = &      \| \mathring{X}\|_q^2  \sum_{v\in \LL_m}   \kappa(v)^2   \prod_{u\prec v} X(u)^2. 
    \end{align*}
Therefore,
\[
\mathbb{E}[|D_m(M)|^q]\lesssim_q \,     \| \mathring{X}\|_q^q  \cdot \E \Big[ \Big(    \sum_{v\in \LL_m}    \kappa(v)^2   \prod_{u\prec v} X(u)^2 \Big)^{q/2} \Big]. 
\]
It follows that (by \eqref{a-and-b}), 
\begin{align}\label{eqn-complex-five}
\begin{split}
\sum_{m\ge 0} \mathbb{E}\left[\left|D_m(M)\right|^q\right]   & \lesssim_q\,   (\E[M])^q +  \| \mathring{X}\|_q^q  \cdot \E \Big[    \sum_{m\ge 1} \Big(    \sum_{v\in \LL_m}   \kappa(v)^2  \prod_{u\prec v} X(u)^2 \Big)^{q/2} \Big] \\
\\
& \le (\E[M])^q    + \| \mathring{X}\|_q^q  \cdot \E \Big[   \Big(   \sum_{m\ge 1}    \sum_{v\in \LL_m}   \kappa(v)^2 \prod_{u\prec v} X(u)^2  \Big)^{q/2} \Big].
\end{split}
\end{align}
On the other hand, by \eqref{Dm-cond}, 
\begin{align}\label{sm-qnorm}
\begin{split}
\|s(M)\|_q^q& =     \E\Big [ \Big(   (\E[M])^2 + \Var(X)  \sum_{m\ge 1}\sum_{x\in \LL_m}   \kappa(x)^2  \prod_{u\prec x} X(u)^2  \Big)^{q/2} \Big]
\\
& \ge    (\E[M])^q +     \Var(X)^{q/2}   \cdot \E\Big [ \Big(    \sum_{m\ge 1}\sum_{x\in \LL_m}   \kappa(x)^2  \prod_{u\prec x} X(u)^2  \Big)^{q/2} \Big].
\end{split}
\end{align}
Comparing \eqref{eqn-complex-five} and \eqref{sm-qnorm} and using the inequality $\| \mathring{X}\|_q \ge \sqrt{\Var(X)}$ for $q\ge 2$, we complete the whole proof of the lemma.
\end{proof}

\begin{proof}[Proof of Lemma \ref{lem-red}]
Combining \eqref{ineq-BR}  and \eqref{BR-diag}, we have 
\[
c_q \| s(M)\|_q \le \| M\|_q \le   c_q' \Big(1+ C_q \frac{\| \mathring{X}\|_q}{\sqrt{\Var(X)}} \Big) \cdot \|s(M)\|_q, 
\]
with $c_q, c_q'$ depend only on $L^q$-version Burkholder-Rosenthal inequality and $C_q$ depends only on the $L^q$-version  Burkholder inequality.  Finally, by using Lemma \ref{prop-alpha-beta}, we complete the whole proof of Lemma \ref{lem-red}. 
\end{proof}

\subsection{The proof of Lemma \ref{lem-lessthantwo}}
Using the notation introduced in \S \ref{sec-red}, we need to show 
\begin{align*}
 \sum_{m\ge 0} (\mathbb{E}[X^q])^m\sum_{v\in \LL_m} \alpha(v)^q \le \mathbb{E}[M^q]
\le C_q \sum_{m\ge 0}(\E[X^q])^{m}  \sum_{v\in \LL_m}    \kappa(v)^q. 
\end{align*}
By \eqref{def-M} and \eqref{a-and-b}, 
\begin{align*}
\mathbb{E}[M^q] &  = \E\Big[ \Big( \sum_{m\ge0} \sum_{v\in \mathcal{L}_m} \alpha(v)\prod_{u\preceq v} X(u)\Big)^q \Big] 
\\
& \ge    \E\Big[ \sum_{m\ge0} \sum_{v\in \mathcal{L}_m} \alpha(v)^q \prod_{u\preceq v} X(u)^q \Big] 
\\
& = \sum_{m\ge 0} (\mathbb{E}[X^q])^m\sum_{v\in \LL_m} \alpha(v)^q.
\end{align*}
Now by \eqref{BD-ineq} and \eqref{a-and-b}, for any $1\le q\le 2$, 
\begin{align*}
\mathbb{E}[M^q]   & = \|M\|_q^q \lesssim_q\,  \Big\|\Big(\sum_{m\ge 0} D_m(M)^2   \Big)^{1/2}\Big\|_q^q
\\
& =\E\Big[\Big(\sum_{m\ge 0}D_m(M)^2\Big)^{q/2}\Big]
\\
&  \leq\sum_{m\ge 0}\mathbb{E}[|D_m(M)|^q].
\end{align*}
By  \eqref{eqn-complex-two} applied to all $1\le q<\infty$ \footnote{Note that in deriving \eqref{eqn-complex-two}, we only used the one sided estimate \eqref{BD-ineq} which holds for all $1\le q<\infty$.}  and by \eqref{a-and-b} again,  for any $m\ge 1$, 
 \begin{align*}
 \mathbb{E}[|D_m(M)|^q|\mathcal{F}_{m-1}]    
  & \lesssim_q  \,  \mathbb{E}\Big[\Big|\sum_{v\in \LL_m}\mathring{X}(v)^2  \kappa(v)^2  \prod_{u\prec v} X(u)^2\Big|^{q/2} \Big|\mathcal{F}_{m-1}\Big]
  \\
  & \le \mathbb{E}\Big[\sum_{v\in \LL_m} |\mathring{X}(v)|^q   \kappa(v)^q \prod_{u\prec v} X(u)^q \Big|\mathcal{F}_{m-1}\Big] 
  \\
  & =     \E[|\mathring{X}|^q]    \sum_{v\in \LL_m}  \kappa(v)^q \prod_{u\prec v} X(u)^q.
 \end{align*}
 Therefore,  combining  $\| X-\E[X]\|_q\le 2 \| X\|_q$,  we get
 \begin{align*}
  \mathbb{E}[|D_m(M)|^q]    &  \lesssim_q\,  \E[|\mathring{X}|^q]    \sum_{v\in \LL_m}  \kappa(v)^q  \cdot \E\Big[ \prod_{u\prec v} X(u)^q\Big]
  \\
  & =  \E[|\mathring{X}|^q]    \sum_{v\in \LL_m}  \kappa(v)^q    (\E[X^q])^{m-1}
  \\
  &    \lesssim_q\, \sum_{v\in \LL_m} \kappa(v)^q (\E[X^q])^m.
 \end{align*}
The desired inequality now follows  immediately (note that  $\E[M] = \kappa(\rho)$): 
 \begin{align*}
\mathbb{E}[M^q]   \lesssim_q\, (\E[M])^q +        \sum_{m\ge 1}  (\E[X^q])^{m} \sum_{v\in \LL_m}  \kappa(v)^q =  \sum_{m\ge 0}  (\E[X^q])^{m} \sum_{v\in \LL_m}  \kappa(v)^q .  
\end{align*}

\section{Proof of Main results}
 In this section, we shall still use the notations introduced in \S \ref{sec-red}.  
 \subsection{The random variable $Y_n$ viewed as weighted sums}
 First,  note that the random variable $Y_n$ defined in \eqref{def-Yn} is a weighted sum in the sense of   \eqref{def-Theta}.  Indeed,  let $\TT_b$ be the infinite rooted $b$-adic tree. For any $n\ge 1$,  consider the weight $\alpha_n: \TT_b\rightarrow \R_{+}$ defined by 
\begin{align}\label{def-an}
\alpha_n(v)=   b^{-n} \mathds{1}(v\in \LL_n) = b^{-n} \mathds{1}(d(v, \rho)= n), \quad \forall v\in \TT_b. 
\end{align}
Then, using the notation \eqref{def-Theta}, we have 
\begin{align}\label{Yn-WS}
Y_n =  \Theta_{\TT_b} (W, \alpha_n). 
\end{align}

\begin{lemma}\label{lem-one-step}
Let $q\ge 2$.  Then 
\[
\E[Y_n^q] \asymp_{q, b, W} \E\Big[ \Big(   \sum_{k = 0}^{n-1} \sum_{v\in \mathcal{L}_k}   b^{- 2k} \prod_{u\preceq v} W(u)^2   \Big)^{q/2}\Big]. 
\]
\end{lemma}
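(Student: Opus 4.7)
By \eqref{Yn-WS} we have $Y_n = \Theta_{\TT_b}(W,\alpha_n)$ with $\alpha_n$ supported on the single sphere $\LL_n$ and of constant value $b^{-n}$ there. The plan is to apply the $q$-to-$q/2$ reduction (Lemma~\ref{lem-red}) directly to $Y_n$, to explicitly compute the modified weight $\beta_n$ using the normalization $\E[W]=1$, and then to check that $\Theta_{\TT_b}(W^2,\beta_n)$ is comparable, uniformly in $n$ and up to multiplicative constants depending only on $b$ and $W$, to the target sum on the right-hand side of the statement.

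Because $\E[W]=1$, for every $x\in\LL_{k+1}$ with $k+1\le n$ the subtree $\TT_b(x)$ contains exactly $b^{n-k-1}$ vertices of $\LL_n$, and hence
\[
\sum_{y\in\TT_b(x)} \alpha_n(y)\,(\E[W])^{d(x,y)} \,=\, b^{n-k-1}\cdot b^{-n} \,=\, b^{-k-1};
\]
this sum vanishes if $k+1>n$. Plugging into \eqref{eqn-new} and \eqref{eqn-new-weight} and using $|\CC(v)|=b$ for every $v\in\TT_b$, a short calculation yields
\[
\beta_n(\rho) \,=\, 1 + \tfrac{\Var(W)}{b},\qquad \beta_n(v) \,=\, \Var(W)\cdot b^{-2k-1} \text{ for } v\in\LL_k,\ 1\le k\le n-1,
\]
and $\beta_n(v)=0$ for $v$ at depth $\ge n$.

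Inserting these values in \eqref{def-Theta} and using $W(\rho)\equiv 1$ gives
\[
\Theta_{\TT_b}(W^2,\beta_n) \,=\, \Bigl(1+\tfrac{\Var(W)}{b}\Bigr) \,+\, \tfrac{\Var(W)}{b}\sum_{k=1}^{n-1} b^{-2k}\sum_{v\in\LL_k}\prod_{u\preceq v} W(u)^2.
\]
Writing $R_n := \sum_{k=0}^{n-1}\sum_{v\in\LL_k} b^{-2k}\prod_{u\preceq v}W(u)^2$ for the target sum, the $k=0$ term equals $1$, so $R_n = 1+\sum_{k=1}^{n-1} b^{-2k}\sum_{v\in\LL_k}\prod_{u\preceq v} W(u)^2$. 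Assuming $W$ is non-degenerate (if $W\equiv 1$ then $Y_n\equiv 1$ and the reduction of Lemma~\ref{lem-red} is vacuous anyway), the coefficients $1+\Var(W)/b$ and $\Var(W)/b$ are strictly positive and depend only on $b,W$, so
\[
\tfrac{\Var(W)}{b}\,R_n \,\le\, \Theta_{\TT_b}(W^2,\beta_n) \,\le\, \Bigl(1+\tfrac{\Var(W)}{b}\Bigr) R_n,
\]
i.e.\ $\Theta_{\TT_b}(W^2,\beta_n)\asymp_{b,W} R_n$. Raising to the power $q/2$, taking expectation, and combining with Lemma~\ref{lem-red} delivers the claimed equivalence.

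The only technical step is the combinatorial bookkeeping to evaluate $\beta_n$ (correctly counting descendants of $v$ at depth $k+1$ in $\LL_n$, and the $b$ children of each vertex, and matching the exponent $-2k-1$ against $-2k$); the rest is an immediate consequence of Lemma~\ref{lem-red}, so I do not expect any substantive obstacle.
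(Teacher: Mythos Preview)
Your proposal is correct and follows essentially the same route as the paper: apply Lemma~\ref{lem-red} to $Y_n=\Theta_{\TT_b}(W,\alpha_n)$, compute the modified weight $\beta_n$ explicitly using $\E[W]=1$ (obtaining $\beta_n(\rho)=1+\Var(W)/b$ and $\beta_n(v)=\Var(W)\,b^{-1-2d(v,\rho)}$ on the first $n-1$ levels), and then observe that $\Theta_{\TT_b}(W^2,\beta_n)\asymp_{b,W} R_n$. The only addition over the paper is your explicit handling of the degenerate case $W\equiv 1$, which the paper silently ignores.
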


\begin{proof}
Using \eqref{def-an} and \eqref{Yn-WS}, by Lemma \ref{lem-red}, we have 
\[
\E[ Y_n^q] = \E[\Theta_{\TT_b}(W, \alpha_n)^q] \asymp_{q,W} \E[\Theta_{\TT_b}(W^2, \beta)^{q/2}],
\]
where  $\beta$ is computed by \eqref{eqn-new} and \eqref{eqn-new-weight}, and is given explicitly by 
\[
\beta(\rho)=  1 + \frac{\Var(W)}{b} \an 
\beta(v)= \frac{\Var(W)}{  b^{1+ 2d(v, \rho)}  }      \mathds{1}(d(v, \rho)< n) \quad \text{for $v\ne \rho$}. 
\]
Hence, 
\begin{align*}
\E[\Theta_{\TT_b}(W^2, \beta_n)^{q/2}]    & =  \E\Big[ \Big(  1 +   \Var(W)  \sum_{k = 0}^{n-1} \sum_{v\in \mathcal{L}_k}   b^{-1 - 2k} \prod_{u\preceq v} W(u)^2   \Big)^{q/2}\Big] 
\\ 
& \asymp_{q, b, W}    \E\Big[ \Big(   \sum_{k = 0}^{n-1} \sum_{v\in \mathcal{L}_k}   b^{- 2k} \prod_{u\preceq v} W(u)^2   \Big)^{q/2}\Big]. 
\end{align*}
This completes the proof of the lemma. 
\end{proof}

\subsection{Two elementary lemmas}

 Recall the random variable $W_{TC}$ defined in \eqref{ex-W}. 
\begin{lemma}\label{lem-TC}
Let  $W$ be a non-negative random variable with $\E[W]=1$.   Then the following assertions are equivalent: 
\begin{itemize}
\item[(i)]  $W\stackrel{d}{=} W_{TC}$;
\item[(ii)]  $\varphi_W \equiv 0$; 
\item[(iii)]    There exist two distinct $q_1 \ne q_2 \in (1, \infty)$ such that   $\varphi_W (q_1)= \varphi_W(q_2) = 0$. 
\end{itemize}
\end{lemma}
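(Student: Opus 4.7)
The plan is to prove the cyclic chain (i)$\Rightarrow$(ii)$\Rightarrow$(iii)$\Rightarrow$(i). The first two implications are immediate: for (i)$\Rightarrow$(ii) I would just compute $\E[W_{TC}^q] = b^q \cdot b^{-1} = b^{q-1}$, whence $\varphi_{W_{TC}}(q) = (q-1) - (q-1) \equiv 0$; and (ii)$\Rightarrow$(iii) is trivial since any two points in $(1,\infty)$ will do. The real content is (iii)$\Rightarrow$(i), which I would prove via the equality case of H\"older's inequality.

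For (iii)$\Rightarrow$(i), assume without loss of generality that $1 < q_1 < q_2 < \infty$ with $\E[W^{q_1}] = b^{q_1 - 1}$ and $\E[W^{q_2}] = b^{q_2 - 1}$. I would write $q_1$ as the convex combination $q_1 = \lambda \cdot 1 + (1-\lambda) q_2$ with $\lambda = (q_2 - q_1)/(q_2 - 1) \in (0,1)$, and apply H\"older's inequality with conjugate exponents $1/\lambda$ and $1/(1-\lambda)$ to the factorization $W^{q_1} = W^\lambda \cdot W^{(1-\lambda)q_2}$:
\[
\E[W^{q_1}] = \E\bigl[W^\lambda \cdot W^{(1-\lambda)q_2}\bigr] \le \E[W]^\lambda \cdot \E[W^{q_2}]^{1-\lambda} = 1^\lambda \cdot b^{(1-\lambda)(q_2-1)} = b^{q_1 - 1}.
\]
The hypothesis $\E[W^{q_1}] = b^{q_1 - 1}$ then forces equality throughout.

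Next I would invoke the equality case of H\"older: since $W \ge 0$, equality above is equivalent to the existence of a constant $C \ge 0$ such that $W = C \cdot W^{q_2}$ almost surely. On the event $\{W > 0\}$ this says $W^{q_2 - 1} \equiv 1/C$, so there exists $c > 0$ such that $W \in \{0, c\}$ a.s.\ (the case $c = 0$ is ruled out by $\E[W] = 1$). The normalization $\E[W] = c \cdot \PP(W = c) = 1$ then forces $\PP(W = c) = c^{-1}$, and plugging into $\E[W^{q_1}] = c^{q_1 - 1} = b^{q_1 - 1}$ with $q_1 > 1$ gives $c = b$. Hence $W \stackrel{d}{=} W_{TC}$.

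The main obstacle to watch for is a subtle point in the equality case of H\"older: a careless reading would conclude that $W$ itself is a.s.\ constant, which contradicts $W \stackrel{d}{=} W_{TC}$. The correct statement is that the non-negative random variables $W$ and $W^{q_2}$ are a.s.\ proportional, a condition that is automatic on $\{W = 0\}$ and forces $W$ to take a single positive value on $\{W > 0\}$; this yields precisely the two-point distribution of $W_{TC}$ rather than a delta mass.
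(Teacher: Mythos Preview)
Your proof is correct and follows essentially the same route as the paper: both argue (iii)$\Rightarrow$(i) by interpolating $q_1$ between $1$ and $q_2$ via H\"older and reading off the equality case. The only cosmetic difference is that the paper first tilts to the probability measure $\mu = W\,\PP$ and applies H\"older there with one factor equal to the constant $1$, which makes the equality case (``$W^{q_1-1}$ is $\mu$-a.s.\ constant'') marginally cleaner; your direct application with both factors powers of $W$ is equivalent, and your discussion of the equality case correctly handles the subtlety on $\{W=0\}$.
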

\begin{proof}
The implications $(i) \Rightarrow (ii) \Rightarrow(iii)$ are trivial.  We now show that $(iii)\Rightarrow (i)$.  First note that $\varphi_W(p_1) =\varphi_W(p_2) = 0$ implies 
\begin{align}\label{2-pt}
\E[W^{p_1}] = b^{p_1-1}  \an \E[W^{p_2}] = b^{p_2 - 1}. 
\end{align}
Since $\E[W]=1$, we may define a new probability measure $\mu= W\PP$.   Assume $p_1<p_2$.  The relations \eqref{2-pt}  combined with the  H\"older's inequality for the measure $\mu$ imply 
\[
b^{p_1-1}   =   \E[W^{p_1}]  = \int W^{p_1-1} d\mu \le \| W^{p_1-1}\|_{L^{\frac{p_2-1}{p_1-1}}(\mu)} \| 1\|_{L^{\frac{p_2-1}{p_2-p_1}} (\mu)} =  b^{p_1-1}. 
\]
The equality holds if and only if $W^{p_1-1}$ is parallel to the constant function with respect to $\mu$. In other words, 
\[
W = \lambda \mathds{1}_A. 
\]
Since $\E[W]=1$ and  $\E[W^{p_1}] = b^{p_1-1}$, we obtain $\lambda =  b$ and $\PP(A)= b^{-1}$. That is,  $W  \stackrel{d}{=} W_{TC}$. 
\end{proof}

\begin{lemma}\label{lem-strict-ineq}
Let $q>1$. Assume that  $W \stackrel{d}{\ne} W_{TC}$ is a non-negative random variable  with $\E[W]= 1$ and $\E[W^q] \le b^{q-1}$.      Then for any $p\in (1, q)$, we have the strict inequality 
\[
 \E[W^p] <b^{p-1}. 
\]
\end{lemma}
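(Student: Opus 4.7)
The plan is to exploit the convexity of $\varphi_W$ together with Lemma \ref{lem-TC}. Recall that $q \mapsto \log \E[W^q]$ is convex (this is a standard consequence of H\"older's inequality for nonnegative random variables), so $\varphi_W$ is convex on the interval where it is finite. Moreover $\varphi_W(1) = 0$ because $\E[W]=1$, and the hypothesis $\E[W^q] \le b^{q-1}$ says $\varphi_W(q) \le 0$.

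For $p \in (1, q)$, I would write $p = \tfrac{q-p}{q-1}\cdot 1 + \tfrac{p-1}{q-1}\cdot q$ and apply convexity of $\varphi_W$ to obtain the weak inequality
\[
\varphi_W(p) \;\le\; \frac{q-p}{q-1}\,\varphi_W(1) + \frac{p-1}{q-1}\,\varphi_W(q) \;=\; \frac{p-1}{q-1}\,\varphi_W(q) \;\le\; 0.
\]

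The only real content is to upgrade this to a \emph{strict} inequality. I would argue by contradiction: suppose $\varphi_W(p)=0$. Then the displayed chain of inequalities collapses to an equality, which (since $(p-1)/(q-1)>0$) forces $\varphi_W(q)=0$. Consequently $\varphi_W$ vanishes at two distinct points $p,q \in (1,\infty)$. By the equivalence (iii)$\Rightarrow$(i) in Lemma \ref{lem-TC}, this means $W \stackrel{d}{=} W_{TC}$, contradicting the standing hypothesis $W \stackrel{d}{\ne} W_{TC}$. Hence $\varphi_W(p)<0$, i.e.\ $\E[W^p] < b^{p-1}$.

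There is no serious obstacle here; the argument is two lines of convexity together with an appeal to the already-proved Lemma \ref{lem-TC}. The only point worth checking carefully is that $\varphi_W$ is genuinely finite on $[1,q]$ so that convexity applies on the whole segment --- this follows from $\E[W^q]<\infty$ (which is part of the hypothesis, since $b^{q-1}<\infty$) combined with the fact that $r \mapsto \E[W^r]$ is finite for all $r \in [1,q]$ by Lyapunov's inequality.
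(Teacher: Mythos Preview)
Your proof is correct and is essentially the same as the paper's: the paper writes the key inequality as an explicit H\"older estimate with respect to the tilted measure $\mu = W\PP$, namely $\E[W^p] = \int W^{p-1}\,d\mu \le (\E[W^q])^{(p-1)/(q-1)} \le b^{p-1}$, which is precisely the convexity of $\varphi_W$ along $[1,q]$ that you invoke. Both arguments then upgrade to strict inequality in the same way, by observing that equality would force $\varphi_W(p)=\varphi_W(q)=0$ and appealing to Lemma~\ref{lem-TC}.
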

\begin{proof}
By the H\"older's inequality with respect to the measure $\mu:=W \PP$, we have 
\begin{align*}
\E[W^p] = \int W^{p-1}  d\mu    \le \| W^{p-1} \|_{L^\frac{q-1}{p-1}(\mu)} \| 1 \|_{L^\frac{q-1}{q-p}(\mu)} \le  b^{p-1}.
\end{align*}
 By Lemma \ref{lem-TC},  we complete the whole proof of the lemma. 
\end{proof}

\subsection{The proof of Theorem \ref{thm-TC}}\label{sec-TC}
Assume that $W \stackrel{d}{=}W_{TC}$.  We divide the proof of Theorem~\ref{thm-TC} into two cases: the dyadic exponents $q = 2^\ell$ and the general exponents $q > 1$. 
\subsubsection{Dyadic exponents} Assume first that  $q  = 2^{\ell}$ with $\ell \ge 1$ being an integer.  The following elementary observation will be repeatedly used: 
\[
\frac{(W_{TC})^2}{b} = W_{TC}. 
\]
Thus, for any $k\ge 0$ and any $v\in \LL_k$ (recall also that, by convention, we set $W_{TC}(\rho) \equiv 1$), 
\begin{align}\label{2-b-2}
b^{-k} \prod_{u\preceq v} W_{TC}(u)^2 = \prod_{u\preceq v} W_{TC}(u). 
\end{align}
This equality, combined with Lemma \ref{lem-one-step},  yields 
\begin{align}\label{1-red-to}
\E[Y_n^{2^\ell}] \asymp_{\ell, b} \E\Big[ \Big(  \sum_{k = 0}^{n-1} \sum_{v\in \mathcal{L}_k}   b^{- k} \prod_{u\preceq v} W_{TC}(u) \Big)^{2^{\ell-1}}\Big]. 
\end{align}
Now define, for any real number $x\ge 1$ (as usual, $\lceil x \rceil$ denotes  the smallest integer  $\ge x$), 
\[
S_x : = \sum_{k = 0}^{\lceil x \rceil} \sum_{v\in \mathcal{L}_k}   b^{- k} \prod_{u\preceq v} W_{TC}(u). 
\]
\begin{lemma}
For any $p \ge 2$, we have 
\begin{align}\label{n-S-n}
  n^{p} \cdot  \E[ (S_{n/2})^{p/2}] \lesssim_{p,b} \E[ S_n^{p}]  \lesssim_{p,b}   n^{p}   \cdot  \E[  S_n^{p/2}]. 
\end{align}
\end{lemma}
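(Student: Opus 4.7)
The plan is to realise $S_n$ as a weighted sum on the $b$-adic tree and invoke the reduction in Lemma~\ref{lem-red}. Setting $\gamma_n(v):=b^{-d(v,\rho)}\mathds{1}(d(v,\rho)\le n)$, we have $S_n=\Theta_{\TT_b}(W_{TC},\gamma_n)$ in the notation of \eqref{def-Theta}, so Lemma~\ref{lem-red} applied with $q=p\ge 2$ yields
\[
\E[S_n^p]\;\asymp_{p,b}\;\E\bigl[\Theta_{\TT_b}(W_{TC}^2,\beta_n)^{p/2}\bigr],
\]
where $\beta_n$ is the weight produced from $\gamma_n$ via \eqref{eqn-new}--\eqref{eqn-new-weight}.

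The first concrete task is to compute $\beta_n$ explicitly. Since $\E[W_{TC}]=1$ and $\Var(W_{TC})=b-1$, for any child $x\in\CC(v)$ of a vertex $v\in\LL_k$ with $k<n$ the inner sum in \eqref{eqn-new-weight} telescopes level-by-level to $\sum_{y\in\TT(x)}\gamma_n(y)=(n-k)\,b^{-k-1}$; summing over the $b$ children of $v$ then gives
\[
\beta_n(\rho)\asymp_b n^2,\qquad \beta_n(v)=(b-1)(n-k)^2\,b^{-2k-1}\ \text{for } v\in\LL_k,\ 1\le k<n,
\]
and $\beta_n(v)=0$ for $d(v,\rho)\ge n$. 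The second step exploits the defining identity \eqref{2-b-2}, namely $W_{TC}^2=b\cdot W_{TC}$, which for $v\in\LL_k$ gives $\prod_{u\preceq v}W_{TC}(u)^2=b^{k}\prod_{u\preceq v}W_{TC}(u)$. Substituting this into $\Theta_{\TT_b}(W_{TC}^2,\beta_n)$ the various powers of $b$ collapse, and writing $T_k:=\sum_{v\in\LL_k}b^{-k}\prod_{u\preceq v}W_{TC}(u)$ one arrives at the key identity
\[
\Theta_{\TT_b}(W_{TC}^2,\beta_n)\;\asymp_b\;\sum_{k=0}^{n-1}(n-k)^2\,T_k.
\]
In other words, the $q$ to $q/2$ reduction converts the flat profile $\gamma_n$ into the triangular profile $(n-k)^2$ acting on the same level sums $T_k$, and this is precisely what produces the factor $n^p$ separating the two sides of \eqref{n-S-n}.

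Both inequalities in \eqref{n-S-n} now follow by elementary majorisation of $(n-k)^2$. For the upper bound, $(n-k)^2\le n^2$ together with $\sum_{k=0}^{n-1}T_k\le S_n$ yields $\Theta_{\TT_b}(W_{TC}^2,\beta_n)\lesssim_b n^2 S_n$, hence $\E[S_n^p]\lesssim_{p,b}n^p\,\E[S_n^{p/2}]$. For the lower bound, restricting the sum to $0\le k\le\lceil n/2\rceil$, where $(n-k)^2\gtrsim_b n^2$, and using $\sum_{k=0}^{\lceil n/2\rceil}T_k=S_{n/2}$, one obtains $\Theta_{\TT_b}(W_{TC}^2,\beta_n)\gtrsim_b n^2 S_{n/2}$, hence $\E[S_n^p]\gtrsim_{p,b}n^p\,\E[S_{n/2}^{p/2}]$.

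The only step requiring genuine care is the explicit determination of $\beta_n$ and the subsequent algebraic collapse powered by $W_{TC}^2=b\cdot W_{TC}$; once the triangular-profile identity for $\Theta_{\TT_b}(W_{TC}^2,\beta_n)$ is in place, both directions of \eqref{n-S-n} are one-line consequences.
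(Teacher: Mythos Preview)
Your proof is correct and follows essentially the same route as the paper: realise $S_n=\Theta_{\TT_b}(W_{TC},\gamma_n)$, apply Lemma~\ref{lem-red}, compute $\beta_n(v)\asymp_b b^{-2k}(n-k)^2$ on $\LL_k$, use \eqref{2-b-2} to rewrite $\Theta_{\TT_b}(W_{TC}^2,\beta_n)\asymp_b\sum_{k=0}^{n-1}(n-k)^2 T_k$, and then bound $(n-k)^2$ by $n^2$ from above and by $\gtrsim n^2$ on $k\le\lceil n/2\rceil$ from below. The only cosmetic differences are your introduction of the level sums $T_k$ and the notation $\gamma_n$ in place of the paper's $\alpha$; the logic matches step for step.
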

\begin{proof}
Consider the weight 
\[
\alpha(v)  : = b^{-d (v, \rho)}  \cdot \mathds{1}(d(v,\rho)\le n). 
\] 
Then $S_n  = \Theta_{\TT_b}(W_{TC}, \alpha)$.   
By Lemma \ref{lem-red}, we have 
\begin{align}\label{ha-ha-ha}
\E[S_n^p] \asymp_{p,b}  \E[\Theta_{\TT_b}(W_{TC}^2, \beta)^{p/2}],
\end{align}
where  $\beta$ is supported on $\cup_{m\le n-1} \LL_m$ and is  computed by \eqref{eqn-new} and \eqref{eqn-new-weight}. More precisely, 
\begin{align*}
\beta(\rho)=   \Big(\sum_{k=0}^{n}  1 \Big)^2 +  \Var(W_{TC})\cdot b \cdot \Big( \sum_{k = 1}^{n}  b^{-1}   \Big)^2  \asymp_{b} n^{2}
\end{align*}
and for any $v\in \LL_m$ with $1\le m   \le n-1$, 
\begin{align*}
\beta(v)& =  \Var(W_{TC}) \cdot b \cdot \Big(  \sum_{k  =m+1}^{n}  b^{-m-1}     \Big)^2 \asymp_{b} b^{-2m} (n-m)^2.
\end{align*}
Therefore, by \eqref{2-b-2} and  \eqref{ha-ha-ha}, 
\begin{align*}
\E[S_n^p]  & \asymp_{p, b}    \E \Big[ \Big( \sum_{k = 0}^{n-1}  \sum_{v\in \LL_k}   b^{-2k}  (n-k)^2 \prod_{u\preceq v} W_{TC}(u)^2   \Big)^{p/2}\Big] 
\\
& = \underbrace{ \E \Big[ \Big( \sum_{k = 0}^{n-1}  \sum_{v\in \LL_k}   b^{-k}  (n-k)^2 \prod_{u\preceq v} W_{TC}(u)   \Big)^{p/2}\Big]}_{\text{denoted $J_n$}}.  
\end{align*}
Clearly, on the one hand, 
\[
 J_n  \le    \E \Big[ \Big( \sum_{k = 0}^{n}  \sum_{v\in \LL_k}   b^{-k}  n^2 \prod_{u\preceq v} W_{TC}(u)   \Big)^{p/2}\Big] = n^p \cdot \E [S_n^{p/2}]. 
\]
On the other hand, 
\[
J_n\ge \E \Big[ \Big( \sum_{k = 0}^{\lceil n/2\rceil}  \sum_{v\in \LL_k}   b^{-k}  (n-k)^2 \prod_{u\preceq v} W_{TC}(u)   \Big)^{p/2}\Big] \gtrsim_p  n^p \cdot  [ (S_{n/2})^{p/2}].
\]
This completes the whole proof of the lemma. 
\end{proof}

\begin{proof}[Concluding the proof of Theorem \ref{thm-TC} for dyadic exponents]
Combining \eqref{1-red-to} and repeatly using the two-sided estimates \eqref{n-S-n}, we obtain 
\begin{align*}
 n^{2^{\ell-1} + 2^{\ell-2} + \cdot + 2} \cdot  \E[S_{n/2^{\ell-1}}]   \lesssim_{\ell,b}  \E[Y_{n+1}^{2^{\ell}}]  \asymp_{\ell, b} \E[S_n^{2^{\ell-1}}] \lesssim_{\ell, b}   n^{2^{\ell-1} + 2^{\ell-2} + \cdot + 2} \cdot  \E[S_n]. 
\end{align*}
Now the equalities $\E[S_n] = n+1$ and $\E[S_{n/2^{\ell-1}}] = \lceil \frac{n}{2^{\ell-1}} \rceil$ yield 
\[
\E[Y_{n+1}^{2^{\ell}}]  \asymp_{\ell, b}   n^{2^{\ell-1} + 2^{\ell-2} + \cdot + 2 +1} = n^{2^\ell - 1}. 
\]
This is the desired asymptotic order. 
\end{proof}

\subsubsection{General exponents}
For a general $q>1$,  we  use an interpolation-extrapolation method.  Indeed, take the smallest $\ell$  with $q < 2^{\ell}$ ($\ell$  depends on $q$). There exists $\theta \in (0, 1)$ with 
\[
\frac{1}{q} = \frac{1- \theta}{1} + \frac{\theta}{2^{\ell}}  \quad (\text{hence $1- 1/q =   (1 - 1/2^{\ell})\theta$)}. 
\] 
Then $\| Y_n\|_q \le \|Y_n\|_1^{1-\theta} \|Y_n\|_{2^\ell}^\theta$. Since $\|Y_n\|_1 = 1$, we have 
\begin{align*}
\|Y_n\|_q \le \|Y_n\|_{2^\ell}^\theta \lesssim_{q, b} (n^{(2^{\ell}-1)})^{\theta/2^{\ell}} =   n^{1- q^{-1}}. 
\end{align*}
On the other hand,  since $1< q < 2^{\ell} < 2^{\ell+1}$, there exists a unique $\theta'\in (0, 1)$ such that 
\[
\frac{1}{2^\ell} = \frac{1- \theta'}{q}  + \frac{\theta'}{2^{\ell+1}}. 
\]
Hence 
\begin{align*}
(n ^{2^\ell -1})^{1/2^\ell}  \lesssim_{q, b} \| Y_n\|_{2^{\ell}} \le \| Y_n\|_q^{1-\theta'} \|Y_n\|_{2^{\ell+1}}^{\theta'} \lesssim_{q,b}  \| Y_n\|_q^{1-\theta'}  (n^{2^{\ell+1} -1})^{\theta'/2^{\ell+1}}.
\end{align*}
Therefore, 
\[
\|Y_n\|_q \gtrsim_{q,b}  n^{(1- 1/2^\ell - \theta' + \theta' /2^{\ell+1})/(1-\theta')} = n^{1- (1/2^\ell - \theta'/2^{\ell+1} )/(1-\theta')} = n^{1- q^{-1}}
\]
and we  complete the whole proof of Theorem \ref{thm-TC}.

\subsection{The proof of Theorem \ref{thm-main}}
From now on, we shall always assume that 
\[
W\stackrel{d}{\ne} W_{TC}. 
\]
Inspired by Lemma \ref{lem-one-step}, for any integer $\ell\ge 1, n \ge 1$, we  introduce 
\begin{align}\label{def-Unl}
U_n (W, \ell): = \sum_{k=0}^{n} \sum_{v\in \LL_k} b^{-2^\ell  k }  \prod_{u \preceq v} W(u)^{2^\ell}. 
\end{align}

\begin{lemma}\label{lem-red-power}
Let   $p \ge 2$ and let $\ell\ge 1$ be an integer.   Assume that  $\E[W^{2^\ell}]<b^{2^\ell-1}$, then 
\begin{align}\label{up-down}
\E [ U_n(W,\ell)^p] \asymp_{p, b, \ell, W}   \E [ U_{n-1}(W,\ell+1)^{p/2}].
\end{align}
\end{lemma}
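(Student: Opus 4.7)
The plan is to view $U_n(W, \ell)$ as a weighted sum on the $b$-adic tree and apply the reduction Lemma \ref{lem-red}. Setting $X := W^{2^\ell}$ and $\alpha(v) := b^{-2^\ell d(v,\rho)}\mathds{1}(d(v,\rho) \le n)$, one checks immediately that $U_n(W,\ell) = \Theta_{\TT_b}(X, \alpha)$. Lemma \ref{lem-red} then gives
\[
\E[U_n(W,\ell)^p] \asymp_{p, \ell, W} \E[\Theta_{\TT_b}(W^{2^{\ell+1}}, \beta)^{p/2}],
\]
where $\beta$ is the modified weight defined by \eqref{eqn-new}--\eqref{eqn-new-weight}. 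The proof is thereby reduced to verifying that $\beta$ is pointwise comparable, up to constants depending only on $b, \ell, W$, to the natural weight $\alpha'(v) := b^{-2^{\ell+1} d(v,\rho)}\mathds{1}(d(v,\rho) \le n-1)$ for $U_{n-1}(W, \ell+1)$.

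The computation of $\beta$ is direct. For any $v \in \LL_m$ with $m \le n-1$ and any $x \in \CC(v) \subset \LL_{m+1}$, counting the $b^{j-m-1}$ descendants of $x$ in $\LL_j$ shows that the inner sum in \eqref{eqn-new-weight} is the geometric series
\[
\sum_{y \in \TT(x)} \alpha(y) \bigl(\E[X]\bigr)^{d(x,y)} = b^{-2^\ell(m+1)} \sum_{i=0}^{n-m-1}\bigl(b^{1-2^\ell}\E[W^{2^\ell}]\bigr)^i,
\]
whose common ratio is strictly less than $1$ \emph{precisely because} $\E[W^{2^\ell}] < b^{2^\ell-1}$. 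Hence this sum is uniformly bounded above and below by positive constants (depending on $b, \ell, W$) for every $0 \le m \le n-1$, and is empty for $m \ge n$. Squaring, summing over the $b$ children of $v$, and multiplying by $\Var(X)$ yields $\beta(v) \asymp b^{-2^{\ell+1} d(v,\rho)}\mathds{1}(d(v,\rho) \le n-1)$ for $v \ne \rho$; a parallel bound on the two summands in \eqref{eqn-new} gives $\beta(\rho) \asymp 1 = \alpha'(\rho)$. The monotonicity and positive homogeneity of $\Theta_{\TT_b}$ in its weight argument then promote this pointwise equivalence to the $(p/2)$-moment, finishing the proof of \eqref{up-down}.

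The only delicate point---and the place that concentrates the entire role of the hypothesis---is the geometric ratio $b^{1-2^\ell}\E[W^{2^\ell}]$. The strict subcritical condition $\E[W^{2^\ell}] < b^{2^\ell-1}$ is exactly what keeps this ratio below $1$ and thus concentrates the modified weight $\beta$ geometrically near the root, matching the shape of $\alpha'$. At the critical exponent $\E[W^{2^\ell}] = b^{2^\ell-1}$ the ratio becomes $1$ and the inner sum picks up a linear factor in $n-m$, destroying the clean equivalence; this is why the reduction cannot be iterated past the critical threshold in one step and is ultimately the source of the polynomial-in-$n$ growth detected in Theorems \ref{thm-TC} and \ref{thm-main}. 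One should also take care at the terminal level $m = n-1$, where the geometric sum collapses to its $i=0$ term equal to $1$, so $\beta(v) \asymp b^{-2^{\ell+1}(n-1)}$ still matches $\alpha'(v)$, and the cutoff alignment is preserved.
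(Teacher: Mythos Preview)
Your proof is correct and follows essentially the same route as the paper's own argument: represent $U_n(W,\ell)$ as $\Theta_{\TT_b}(W^{2^\ell},\alpha)$, apply Lemma~\ref{lem-red}, and then verify via the geometric-series computation (with ratio $b^{1-2^\ell}\E[W^{2^\ell}]<1$) that the resulting weight $\beta$ is pointwise comparable to $\alpha_{\ell+1,n-1}$. Your exposition is in fact slightly more explicit than the paper's about why the subcritical hypothesis enters and about the boundary level $m=n-1$, but the mathematical content is identical.
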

\begin{proof}
Consider the following weight 
\[
\alpha_{\ell, n}(v) = b^{-2^\ell d(v, \rho)} \mathds{1}(d(v,\rho)\le n), \quad v \in \TT_b. 
\]
Then, under the notation \eqref{def-Theta},  
\begin{align}\label{two-U}
U_n(W, \ell)= \Theta_{\TT_b}(W^{2^\ell}, \alpha_{\ell, n}) \an U_{n-1}(W, \ell+1)= \Theta_{\TT_b}(W^{2^\ell}, \alpha_{\ell-1, n+1}) . 
\end{align}
By Lemma \ref{lem-red}, 
\begin{align}\label{U-U}
\E[U_n(W, \ell)^{p}]  \asymp_{p, W, \ell}  \E[\Theta_{\TT_b} (W^{2^{\ell+1}}, \beta)^{p/2}],
\end{align}
where  $\beta$ is supported on $\cup_{m\le n-1} \LL_m$   and is given by 
\[
\beta(\rho)=  \Big(  \sum_{k = 0}^n  b^k b^{-2^\ell k}   (\mathbb{E}[W^{2^\ell}])^k \Big)^2+ \Var(W^{2^\ell})  \cdot b\cdot \Big( \sum_{k = 1}^n     b^{k-1}   b^{-2^\ell  k}  (\E[W^{2^\ell}])^{k-1}  \Big)^2
\]
and for $v\in \LL_m$ with $1\le m \le n-1$, 
\[
\beta(v) = \Var(W^{2^\ell})  \cdot b\cdot \Big( \sum_{k = m+1}^n     b^{k-m-1}   b^{-2^\ell  k}  (\E[W^{2^\ell}])^{k-m-1}  \Big)^2.
\]
Under the assumption  $\E[W^{2^\ell}]<b^{2^\ell-1}$,    for any $v\in \LL_m$ with $0\le m \le n-1$, 
\[
\beta(v) \asymp_{b, \ell, W}     b^{-2^{\ell+1}m}. 
\]
In other words, 
\begin{align}\label{beta-to-alpha}
\beta \asymp_{b, \ell, W} \alpha_{\ell+1, n-1}. 
\end{align}
The desired relation \eqref{up-down} follows from \eqref{two-U}, \eqref{U-U} and \eqref{beta-to-alpha}. 
\end{proof}

\begin{proof}[Proof of Theorem \ref{thm-main}]
Fix $q\ge 2$.  Assume that $W \stackrel{d}{\ne} W_{TC}$ and $\E[W^q]   =   b^{q-1}$.  Note that there exists a unique integer $\ell_0 \ge 1$ such that $2^{\ell_0} \le q <  2^{\ell_0 + 1} $.     Fix a large  integer $n$, say $n\ge  4 \ell_0$. 

Supose first that $\ell_0 = 1$, then by Lemma \ref{lem-one-step} and  \eqref{def-Unl}, 
\[
\E[Y_n^q] \asymp_{q,b,W} \E[U_{n-1}(W, 1)^{q/2}] = \E[U_{n-\ell_0}(W, \ell_0)^{q/2^{\ell_0}}]. 
\]
Now assume $\ell_0 \ge 2$. Then  by Lemma \ref{lem-strict-ineq}, 
\[
\E[W^{2^\ell}]<b^{2^\ell-1} \quad \text{for all\,}  1\le \ell \le \ell_0-1. 
\] 
Therefore, by Lemma \ref{lem-one-step} and repeatly using \eqref{up-down} in  Lemma \ref{lem-red-power}, we obtain 
\begin{align*}
 \E[Y_n^q] & \asymp_{q,b,W} \E[U_{n-1}(W, 1)^{q/2}]     \asymp_{q,b,W} \E[U_{n-2}(W, 2)^{q/2^2}]    
 \\
 & \asymp_{q,b,W}    \cdots   \asymp_{q,b,W}   \E[U_{n-\ell_0+1}(W, \ell_0-1)^{q/2^{\ell_0-1}}]  \asymp_{q,b,W} \E[U_{n-\ell_0}(W, \ell_0)^{q/2^{\ell_0}}].       
\end{align*}
Note that in the last asymptotic equivalence, we only need the condition  
\[
\E[W^{2^{\ell_0-1}}]<b^{2^{\ell_0-1} -1}.
\] 
In conclusion, 
\begin{align}\label{Yn-final}
 \E[Y_n^q] \asymp_{q, b, W}  \underbrace{ \E\Big[ \Big(   \sum_{k = 0}^{n-\ell_0} \sum_{v\in \mathcal{L}_k}   b^{ - 2^{\ell_0}k} \prod_{u\preceq v} W(u)^{2^{\ell_0}}  \Big)^{q/ 2^{\ell_0}}\Big]}_{\text{denoted $I_{\ell_0}$}}. 
\end{align}
Since $1\le q/2^{\ell_0} <2$,  by Lemma \ref{lem-lessthantwo} applied to the  triple $(\alpha'', W^{2^{\ell_0}}, \TT_b)$, with 
\[
\alpha''(v)= b^{-2^{\ell_0} d(v, \rho)} \mathds{1}(d(v, \rho)\le n-\ell_0),
\]
we obtain  for all $v\in \LL_m$ with $0 \le m \le n - \ell_0$, 
\[
\kappa_{\alpha'', W^{2^{\ell_0}}, \TT_b} (v)=  \sum_{k   = m}^{n-\ell_0} \left(\mathbb{E}[W^{2^{\ell_0}}]\right)^{k-m}  b^{-m} b^{-(2^{\ell_0}-1) k   } \le  \frac{b^{-2^{\ell_0}m}}{1 - \E[W^{2^{\ell_0}}]/b^{2^{\ell_0} -1}}
\]
and hence 
\begin{align}\label{2-side-es}
 \sum_{m=0}^{n-\ell_0} (\mathbb{E}[W^q])^m    b^{-m(q-1)}   \le I_{\ell_0} \le    \frac{C_q}{1 - \E[W^{2^{\ell_0}}]/b^{2^{\ell_0} -1}}   \sum_{m=0}^{n-\ell_0} (\mathbb{E}[W^q])^m    b^{-m(q-1)} . 
\end{align}
The assumption  $\E[W^q]   =  b^{q-1}$, the equivalence \eqref{Yn-final} and  the two-sided estimates  \eqref{2-side-es}  together yield 
\[
n \asymp_{q, b, W} \E[Y_n^q]. 
\] 
This completes the whole proof. 
\end{proof}

\subsection{The proof of Proposition \ref{prop-main}}

\subsubsection{The subcritical case} Indeed, if $q\ge 2$, then under the assumption $\E[W^q]< b^{q-1}$, we may repeat the arguments in the proof of Theorem \ref{thm-main} and arrive at the two-sided estimates \eqref{2-side-es}.  Then by $\E[W^q]< b^{q-1}$, we obtain 
$
\sup_n \E[Y_n^q]<\infty. 
$
Now assume that $1< q <2$ and $\E[W^q]<b^{q-1}$.  By Lemma \ref{lem-lessthantwo}, 
\begin{align*}
\E[Y_n^q] &  \lesssim_{q} \sum_{v\in \TT} (\E[W^q])^{d(v,\rho)} \Big(\sum_{y\in \TT:   \, v\preceq y}  b^{-n}\Big)^q 
\\
 & = \sum_{m  =0}^n  (\E[W^q])^m    \sum_{v\in \LL_m}   \Big(\sum_{y\in \TT:   \, v\preceq y}  b^{-n}\Big)^q 
 \\
 &  = \sum_{m =  0}^n  (\E[W^q])^m  \cdot  b^m  \cdot b^{-mq} \le \frac{1}{1 - \E[W^q]/b^{q-1}}. 
\end{align*}
The desired relation $\sup_{n} \E[Y_n^q]<\infty$ follows. 

\subsubsection{The supercritical case}

For any $q>1$,  by \eqref{l-es},  
\[
\E[Y_n^q]\ge     (\E[W^q])^n  \cdot b^{-n(q-1)}. 
\]
Thus 
\[
\liminf_{n\to\infty} \frac{\log \E[Y_n^q]}{n} \ge  \log\frac{\E[W^q]}{b^{q-1}}. 
\]
If moreover, $1<q\le 2$, then  by Lemma \ref{lem-lessthantwo},  we have 
\begin{align*}
\E[Y_n^q] &  \lesssim_{q} \sum_{v\in \TT} (\E[W^q])^{d(v,\rho)} \Big(\sum_{y\in \TT:   \, v\preceq y}  b^{-n}\Big)^q 
\\
 & = \sum_{m  =0}^n  (\E[W^q])^m    \sum_{v\in \LL_m}   \Big(\sum_{y\in \TT:   \, v\preceq y}  b^{-n}\Big)^q 
 \\
 &  = \sum_{m =  0}^n  (\E[W^q])^m  \cdot  b^m  \cdot b^{-mq} \le \frac{(\E[W^q]/b^{q-1})^n}{1 -  b^{q-1}/\E[W^q]}. 
\end{align*}
Therefore, 
\[
\limsup_{n\to\infty}  \frac{\log \E[Y_n^q]}{n} \le  \log\frac{\E[W^q]}{b^{q-1}}. 
\]
Hence
\[
\lim_{n\to\infty}   \frac{\log \E[Y_n^q]}{n}  =   \log\frac{\E[W^q]}{b^{q-1}} \quad \text{for $1<q\le 2$}. 
\]


\end{document}